\documentclass[12pt, reqno]{amsart}

\usepackage{amssymb} 
\usepackage{mathrsfs}
\usepackage{url}

\newtheorem{theorem}{Theorem}[section]
\newtheorem{lemma}[theorem]{Lemma}
\newtheorem{prop}[theorem]{Proposition}
\newtheorem{corollary}[theorem]{Corollary}
\newtheorem{example}[theorem]{Example}
\newtheorem{remark}[theorem]{Remark}
\newtheorem{obser}[theorem]{Observation}
\newtheorem{scholium}[theorem]{Scholium}

\title[Note as to inclusion-minimal non-Bondy systems]{Note as to inclusion-minimal non-Bondy systems}

\author[T.~J.~Kepka]{Tom\'{a}\v{s}~J.~Kepka}
\address{Faculty of Education \\
Charles University, M.\ Rettigov\'{e} 4 \\
116 39 Praha 1, Czech Republic}
\email{kepka@karlin.mff.cuni.cz}

\author[P.~C..~Nemec]{Petr~C.~N\v{e}mec}
\address{Faculty of Engineering\\
Czech University of Life Sciences, Kam\'{y}ck\'a 129 \\
165 21 Praha 6-Suchdol, Czech Republic}
\email{nemec@tf.czu.cz}

\author[J.~D.~Phillips]{J.~D.~Phillips}
\address{Department of Mathematics \& Computer Science \\
Northern Michigan University \\ Marquette, MI 49855 USA}
\email{\url{jophilli@nmu.edu}}
\urladdr{\url{http://euclid.nmu.edu/~jophilli/}}

\date{\today}

\begin{document}

\begin{abstract}

Let $S$ be a finite set, $s=|S|\ge6$. Given a non-negative integer $t$, there exists an inclusion-minimal non-Bondy system $\mathscr{A}$ of size $t$ on $S$ if and only if $s+1\le t\le2s$.
\end{abstract}

 \keywords{Boolean algebra, Bondy system}

\subjclass{05B99, 06E99}

\maketitle

\section{Introduction (Bondy Systems)}

Throughout the present short note, $S$ stands for a non-empty finite set, $s = |S| \ge 1, \mathcal{P}(S)$ denotes the Boolean algebra of subsets of $S$, and any subset of $\mathcal{P}(S)$ is called a \textit{system} (of subsets of $S$). Subsets of $\mathcal{P(P}(S))$ are then called \textit{ensembles} (of systems).

Let $\mathfrak{A}$ be an ensemble of systems. A system $\mathscr{A} \in \mathfrak{A}$ is said to be
\begin{enumerate}
\item[\rm--]\textit{size-minimal (maximal)} (in $\mathfrak{A}$) if $|\mathscr{A}| \le |\mathscr{B}| \; (|\mathscr{B}| \le |\mathscr{A}|)$ for any system $\mathscr{B} \in \mathfrak{A}$;
\item[\rm--]\textit{inclusion-minimal (maximal)} if $\mathscr{A} = \mathscr{C}$ whenever $\mathscr{C} \in \mathfrak{A}$ is such that $\mathscr{C} \subseteq \mathscr{A}$ ($\mathscr{A}  \subseteq \mathscr{C}$).\end{enumerate} 

A system $\mathscr{A}$ (of subsets of $S$) is called a \textit{Bondy system} if there exists  at least one element $a$ in $S$ such that $a$ is in  $A$ whenever $A$ and $A \cup \{a\}$ are in $\mathscr{A}$.  

The following principal result was proved by J.A. Bondy in \cite{JAB}.

\begin{theorem}\label{1.2} {\rm(Bondy)} If $|\mathscr{A}| \le s (= |S|)$ then $\mathscr{A}$ is a Bondy system.\end{theorem}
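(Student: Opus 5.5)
We interpret the definition in its standard form: $\mathscr{A}$ is Bondy if there is $a\in S$ such that the traces $A\setminus\{a\}$, $A\in\mathscr{A}$, are pairwise distinct. Indeed, if $A\neq B$ in $\mathscr{A}$ have $A\setminus\{a\}=B\setminus\{a\}$, then one of them, say $B$, equals $A\cup\{a\}$ with $a\notin A$. Conversely, such a pair $A,\ A\cup\{a\}\in\mathscr{A}$ with $a\notin A$ gives a collision. So the definition says precisely that deleting $a$ keeps the sets of $\mathscr{A}$ distinct. The plan is the classical graph argument. Suppose, for contradiction, that $\mathscr{A}$ is not Bondy, with $|\mathscr{A}|=m\le s$. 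Then for every $a\in S$ there is a pair $A_a,\ A_a\cup\{a\}$ in $\mathscr{A}$ with $a\notin A_a$. Build a graph $G$ on the vertex set $\mathscr{A}$ by choosing, for each $a\in S$, exactly one such edge $e_a=\{A_a,A_a\cup\{a\}\}$, and label $e_a$ by $a$. The edges are distinct, since two sets determine the element in their symmetric difference. Hence $G$ has $s\ge m$ edges on $m$ vertices and therefore contains a cycle.

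The key step is to derive a contradiction from this cycle $C_0,C_1,\dots,C_k=C_0$, with consecutive sets joined by edges labelled $a_1,\dots,a_k$. Each step adds or removes exactly the element $a_i$, and the labels along the cycle are pairwise distinct because each label is used for only one edge. Returning to $C_0$ forces each element to be toggled an even number of times, so $a_1$ must appear at least twice among the labels. This contradicts distinctness. Hence $G$ is a forest, so it has at most $m-1<s$ edges, which is the desired contradiction.

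There is no real obstacle; the one point needing care is the reformulation of the definition in the first step. In particular, one must treat the case $a\in A$ correctly: a pair $A,\ A\cup\{a\}$ with $a\in A$ is a single set and is not a violation. After that, the proof is just the observation that a labelling with distinct labels and symmetric-difference edges cannot close a cycle. As an alternative presentation, one can induct on $s$ by contracting an edge, but the forest argument is the most direct route.
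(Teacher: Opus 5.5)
Your proof is correct. For each $a$ you choose one edge $\{A_a, A_a\cup\{a\}\}$, which gives $s$ distinct edges on $|\mathscr{A}|\le s$ vertices and hence a cycle. The labels along that cycle are pairwise distinct, so traversing it toggles the non-empty set $\{a_1,\dots,a_k\}$, and the cycle cannot close.

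The paper gives no argument of its own here; it only cites Bondy's note, and later restates the result as its Theorem~2.5. So there is no competing route to compare. What you wrote is the standard graph-theoretic proof of Bondy's theorem, and it makes the statement self-contained.

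The reformulation in your first paragraph (distinct traces after deleting $a$) is correct but never used. The cycle argument works directly with the paper's definition, so that paragraph can be dropped.
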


Next, we collect a handful of easy and well-known observations concerning Bondy systems.

\begin{prop}\label{1.3} 
{\rm(i)} A system $\mathscr{A}$ is a Bondy system provided that either $\cap \mathscr{A} \ne \emptyset$ or $\cup \mathscr{A} \ne S$ or $\mathscr{A}$ does not generate the Boolean algebra $\mathcal{P}(S)$.\newline
{\rm(ii)} If $\mathscr{A}$ is a Bondy system and $\mathscr{B}$ is a system such that $\mathscr{B} \subseteq \mathscr{A}$ then $\mathscr{B}$ is a Bondy system, too.\newline 
{\rm(iii)} If $\mathscr{A}$ is a Bondy system then the system $\overline{\mathscr{A}} =  \{S \setminus A: A \in \mathscr{A}\}$ is Bondy as well.\newline 
{\rm(iv)} If $\mathscr{A}$ is a Bondy system then $0 \le |\mathscr{A}| \le 2^{s-1}$.\end{prop}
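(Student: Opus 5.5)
Each of the four parts follows directly from the definition: $\mathscr{A}$ is Bondy iff some $a\in S$ admits no pair $A, A\cup\{a\}\in\mathscr{A}$ with $a\notin A$, i.e.\ no pair $B, B\setminus\{a\}$ of distinct members. I would verify each part in turn, leaving the counting in (iv) for last.

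For (i), take the three hypotheses separately. If $a\in\bigcap\mathscr{A}$, then every $A\in\mathscr{A}$ contains $a$, so the condition holds for this $a$. If $a\in S\setminus\bigcup\mathscr{A}$, then $A\cup\{a\}\in\mathscr{A}$ is impossible, so the condition holds vacuously. If $\mathscr{A}$ does not generate $\mathcal{P}(S)$, then the atoms of the generated Boolean subalgebra are not all singletons. Hence there are distinct $a,b\in S$ that are not separated by $\mathscr{A}$: every $A\in\mathscr{A}$ contains both or neither. Suppose $A$ and $A\cup\{a\}$ both lie in $\mathscr{A}$ with $a\notin A$. Then $b\notin A$, since $A$ does not contain $a$. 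But $A\cup\{a\}$ contains $a$, so it must contain $b$, which forces $b\in A$. This contradiction shows that $a$ works.

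Part (ii) is immediate: the witness $a$ for $\mathscr{A}$ also witnesses $\mathscr{B}$, because any offending pair in $\mathscr{B}$ would be an offending pair in $\mathscr{A}$.

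For (iii), let $a$ witness $\mathscr{A}$. Suppose $C$ and $C\cup\{a\}$ lie in $\overline{\mathscr{A}}$ with $a\notin C$. Their complements $S\setminus C$ and $(S\setminus C)\setminus\{a\}$ lie in $\mathscr{A}$. Setting $A=(S\setminus C)\setminus\{a\}$, we get $A$ and $A\cup\{a\}=S\setminus C$ in $\mathscr{A}$ with $a\notin A$, a contradiction. So $a$ witnesses $\overline{\mathscr{A}}$ as well.

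For (iv), which I expect to be the only step requiring an idea, let $a$ be a witness. Partition $\mathcal{P}(S)$ into the $2^{s-1}$ pairs $\{B, B\cup\{a\}\}$ with $B\subseteq S\setminus\{a\}$. By the Bondy property, $\mathscr{A}$ contains at most one member of each pair, so $|\mathscr{A}|\le 2^{s-1}$; the lower bound $0\le|\mathscr{A}|$ is trivial. This bound is attained, for example by $\{A: a\in A\}$.
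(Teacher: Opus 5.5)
The paper gives no proof of this proposition; it lists it among ``easy and well-known observations''. Your argument is a correct and complete verification of (i)--(iv) from the definition: in (i), the non-generation case is properly reduced to two distinct points $a\ne b$ in a common atom of the generated subalgebra, and in (iv) the bound $2^{s-1}$ comes from partitioning $\mathcal{P}(S)$ into the pairs $\{B,B\cup\{a\}\}$, which is exactly the structure behind Proposition 1.5(4).
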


\begin{prop}\label{1.4} The following conditions are equivalent for a~non-negative integer $t$:
\begin{enumerate}
\item[\rm(1)]
There is at least one Bondy system $\mathscr{A}$ on $S$ such that $|\mathscr{A}| = t$.
\item[\rm(2)]
$0 \le t \le 2^{s-1}$.
\end{enumerate}\end{prop}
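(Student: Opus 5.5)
The implication (1)$\Rightarrow$(2) is exactly Proposition~\ref{1.3}(iv), so the work lies in (2)$\Rightarrow$(1). Here I will build, for every $t$ with $0\le t\le 2^{s-1}$, an explicit Bondy system of size $t$. Fix an element $a\in S$ and put
\[
\mathscr{M}_a=\{A\in\mathcal{P}(S): a\in A\}.
\]
Then $|\mathscr{M}_a|=2^{s-1}$, since these sets correspond bijectively to the subsets of $S\setminus\{a\}$.

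The key step is to show that $\mathscr{M}_a$ is a Bondy system, witnessed by the element $a$ itself. Suppose $A$ and $A\cup\{a\}$ both lie in $\mathscr{M}_a$. Then in particular $A\in\mathscr{M}_a$, so $a\in A$, which is precisely what the definition requires. Alternatively, one can note that $\cap\mathscr{M}_a\ni a$ is non-empty and invoke Proposition~\ref{1.3}(i).

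Now let $0\le t\le 2^{s-1}$, and choose any subsystem $\mathscr{A}\subseteq\mathscr{M}_a$ with $|\mathscr{A}|=t$; this is possible because $t\le|\mathscr{M}_a|$. By Proposition~\ref{1.3}(ii), $\mathscr{A}$ is again a Bondy system. This gives (1). The case $t=0$, where $\mathscr{A}$ is the empty system, is covered by the same argument, since the Bondy condition then holds vacuously for any $a\in S$, and $S\neq\emptyset$ by the standing assumption.

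There is no genuine obstacle in this argument. The only point to check is that the upper bound in Proposition~\ref{1.3}(iv) is attained, and the system $\mathscr{M}_a$ shows that it is.
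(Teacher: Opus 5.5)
Your argument is correct and is essentially the one the paper leaves implicit; the paper lists Proposition~\ref{1.4} among its ``easy and well-known observations'' and gives no proof. There, (1)$\Rightarrow$(2) is Proposition~\ref{1.3}(iv), and (2)$\Rightarrow$(1) follows by applying Proposition~\ref{1.3}(ii) to subsystems of a Bondy system of size $2^{s-1}$; your $\mathscr{M}_a$ is exactly the case $w=a$, $\mathscr{W}=\emptyset$ of Proposition~\ref{1.5}(4).
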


\begin{prop}\label{1.5} The following conditions are equivalent for a~system $\mathscr{A}$:
\begin{enumerate}
\item[\rm(1)]
$\mathscr{A}$ is a size-maximal Bondy system.
\item[\rm(2)]
$\mathscr{A}$ is an inclusion-maximal Bondy system.
\item[\rm(3)]
$\mathscr{A}$ is a Bondy system and $|\mathscr{A}| = 2^{s-1}$.
\item[\rm(4)]
There exist an element $w \in S$ and a subset $\mathscr{W} \subseteq \mathcal{P}(S \setminus \{w\})$ such that $\mathscr{A} = \mathscr{W} \cup \{A \cup \{w\} : A \in \mathcal{P}(S \setminus \{w\}) \setminus \mathscr{W}\}$.
\end{enumerate}\end{prop}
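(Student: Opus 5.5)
We prove the cycle of implications (1)$\Rightarrow$(2)$\Rightarrow$(3)$\Rightarrow$(4)$\Rightarrow$(1). The main tool is the pairing of $\mathcal{P}(S)$ along a fixed element. For $w\in S$, partition $\mathcal{P}(S)$ into the $2^{s-1}$ pairs $\{B, B\cup\{w\}\}$ with $B\in\mathcal{P}(S\setminus\{w\})$. By definition, $\mathscr{A}$ is Bondy with witness $w$ exactly when $\mathscr{A}$ contains at most one set from each such pair. This fact gives Proposition \ref{1.3}(iv) directly, since a Bondy system has some witness $w$ and then at most $2^{s-1}$ members.

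For (4)$\Rightarrow$(1): the described $\mathscr{A}$ picks exactly one set from each $w$-pair. Indeed, for $B\in\mathcal{P}(S\setminus\{w\})$ it contains $B$ if $B\in\mathscr{W}$, and it contains $B\cup\{w\}$ otherwise. So $\mathscr{A}$ is Bondy with witness $w$ and $|\mathscr{A}|=2^{s-1}$. By Proposition \ref{1.3}(iv) no Bondy system is larger, so $\mathscr{A}$ is size-maximal.

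For (1)$\Rightarrow$(2): a size-maximal Bondy system cannot be properly contained in another Bondy system, since that system would be strictly larger.

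For (2)$\Rightarrow$(3): let $\mathscr{A}$ be inclusion-maximal Bondy with witness $w$. Suppose some $w$-pair $\{B,B\cup\{w\}\}$ is disjoint from $\mathscr{A}$. Then adding $B$ to $\mathscr{A}$ keeps at most one set per $w$-pair, so $\mathscr{A}\cup\{B\}$ is still Bondy with witness $w$. This contradicts maximality. Hence every pair meets $\mathscr{A}$ in exactly one set, and $|\mathscr{A}|=2^{s-1}$.

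For (3)$\Rightarrow$(4): take a witness $w$ for $\mathscr{A}$. Having at most one set per pair, and exactly $2^{s-1}$ sets among the $2^{s-1}$ pairs, $\mathscr{A}$ must meet every pair exactly once. Put $\mathscr{W}=\mathscr{A}\cap\mathcal{P}(S\setminus\{w\})$. The sets of $\mathscr{A}$ containing $w$ are then exactly $B\cup\{w\}$ for $B\notin\mathscr{W}$, which is the form in (4).

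The only subtle point is the reformulation of the Bondy condition as "at most one set per $w$-pair". It holds because the defining condition is vacuous when $A\ni w$ and forbids $A$ and $A\cup\{w\}$ both being in $\mathscr{A}$ when $w\notin A$. Everything else is counting.
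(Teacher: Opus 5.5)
Your proof is correct and complete. The key step is restating the Bondy condition with witness $w$ as ``at most one set from each pair $\{B, B\cup\{w\}\}$, $B\subseteq S\setminus\{w\}$'', and that restatement is exactly right; each implication in your cycle then follows by counting. The paper gives no proof of this proposition (it is listed among ``easy and well-known observations''), so there is nothing to compare against. Your pairing argument is the natural one, and it also gives the bound in Proposition~\ref{1.3}(iv).
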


\section{Introduction (Non-Bondy Systems)}
\label{intro ii}

Let $\mathscr{A}$ be a system (on $S$). We set $\lambda(\mathscr{A}) \;(\lambda_1(\mathscr{A}), \; \mathrm{respectively}) = \{a \in S : A \cup \{a\} \in \mathscr{A}$ for at least (just, respectively) one $A \in \mathscr{A}, a \notin A\}$.

The following assertion is easy to check directly:

\begin{prop}\label{2.2} $\mathscr{A}$ is a non-Bondy system if and only if $\lambda(\mathscr{A}) = S$.\end{prop}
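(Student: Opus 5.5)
This is a direct unwinding of the definitions: I will write out the negation of the Bondy property and observe that it is exactly the membership condition defining $\lambda(\mathscr{A})$. Since $\lambda(\mathscr{A}) \subseteq S$ holds trivially, the equality $\lambda(\mathscr{A}) = S$ is equivalent to the inclusion $S \subseteq \lambda(\mathscr{A})$. So the claim reduces to showing that $\mathscr{A}$ is non-Bondy if and only if every $a \in S$ lies in $\lambda(\mathscr{A})$.

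By definition, $\mathscr{A}$ is Bondy when some $a \in S$ satisfies the following: for all $A$, if $A \in \mathscr{A}$ and $A \cup \{a\} \in \mathscr{A}$, then $a \in A$. Negating, $\mathscr{A}$ is non-Bondy precisely when, for every $a \in S$, there is some $A \in \mathscr{A}$ with $A \cup \{a\} \in \mathscr{A}$ and $a \notin A$. This last condition on $a$ is, word for word, the condition that $a \in \lambda(\mathscr{A})$.

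For the direction ``non-Bondy implies $\lambda(\mathscr{A}) = S$'', I fix an arbitrary $a \in S$. I use non-Bondyness to produce a witness $A$, and conclude $a \in \lambda(\mathscr{A})$. For the converse, I assume $\lambda(\mathscr{A}) = S$ and suppose some $a$ witnesses the Bondy property. Since $a \in \lambda(\mathscr{A})$, there is $A \in \mathscr{A}$ with $a \notin A$ and $A \cup \{a\} \in \mathscr{A}$, which contradicts the Bondy property for $a$.

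There is no real obstacle here. The only points needing care are negating the quantifiers correctly and noting that, when $a \notin A$, the sets $A$ and $A \cup \{a\}$ are distinct. That distinctness is automatic and is not actually needed for the argument.
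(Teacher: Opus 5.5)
Your proposal is correct and is exactly the direct definitional check the paper has in mind: the paper gives no proof, only the remark that the assertion is ``easy to check directly''. Negating the Bondy condition element by element gives precisely the membership condition for $\lambda(\mathscr{A})$, and $\lambda(\mathscr{A}) \subseteq S$ holds by definition, so the equality $\lambda(\mathscr{A}) = S$ follows.
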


As usual, we continue with a bunch of straight-forward observations concerning non-Bondy systems (use \ref{1.3} and \ref{1.4}).

\begin{prop}\label{2.3} Let $\mathscr{A}$ be a non-Bondy system. Then:
\begin{enumerate}
\item[\rm(i)]
$\cap \mathscr{A} = \emptyset, \cup \mathscr{A} = S$ and $\mathscr{A}$ generates the Boolean algebra $\mathcal{P}(S)$.
\item[\rm(ii)]
A system $\mathscr{B}$ is non-Bondy, provided that $\mathscr{A} \subseteq \mathscr{B}$.
\item[\rm(iii)]
The system $\overline{\mathscr{A}}$ is non-Bondy, too.
\end{enumerate}\end{prop}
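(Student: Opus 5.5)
Each part follows from the corresponding part of Proposition \ref{1.3} by contraposition, so the plan is a direct reduction to that proposition.

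For (i), suppose $\mathscr{A}$ is non-Bondy. If $\cap\mathscr{A}\ne\emptyset$, or $\cup\mathscr{A}\ne S$, or $\mathscr{A}$ failed to generate $\mathcal{P}(S)$, then Proposition \ref{1.3}(i) would make $\mathscr{A}$ a Bondy system. That is a contradiction, so all three conditions must hold.

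For (ii), let $\mathscr{A}\subseteq\mathscr{B}$ and suppose $\mathscr{B}$ were Bondy. Proposition \ref{1.3}(ii) would then make its subsystem $\mathscr{A}$ Bondy, a contradiction. One could also argue directly through Proposition \ref{2.2}: the set $\lambda(\cdot)$ is monotone with respect to inclusion of systems, so $S=\lambda(\mathscr{A})\subseteq\lambda(\mathscr{B})\subseteq S$.

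For (iii), I would first note that complementation is an involution, $\overline{\overline{\mathscr{A}}}=\mathscr{A}$. If $\overline{\mathscr{A}}$ were Bondy, Proposition \ref{1.3}(iii) applied to $\overline{\mathscr{A}}$ would make $\overline{\overline{\mathscr{A}}}=\mathscr{A}$ Bondy, a contradiction.

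This involution step is the only point needing care, and it is immediate. Should one prefer a self-contained argument for (iii), one can check that the witness $a$ transfers under complementation. If $A$ and $A\cup\{a\}$ are both in $\mathscr{A}$ with $a\notin A$, then $S\setminus(A\cup\{a\})$ and $(S\setminus(A\cup\{a\}))\cup\{a\}=S\setminus A$ are both in $\overline{\mathscr{A}}$, and $a\notin S\setminus(A\cup\{a\})$. Hence $\lambda(\mathscr{A})\subseteq\lambda(\overline{\mathscr{A}})$, and Proposition \ref{2.2} finishes the argument.
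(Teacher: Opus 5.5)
Your proof is correct and takes the same approach as the paper, which just says to derive each part of Proposition~\ref{2.3} from the corresponding part of Proposition~\ref{1.3}, i.e.\ by contraposition as you do. Your use of the involution $\overline{\overline{\mathscr{A}}}=\mathscr{A}$ in (iii) is the one step the paper leaves implicit, and your optional direct arguments via $\lambda$ and Proposition~\ref{2.2} are also valid.
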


\begin{prop}\label{2.4} A system $\mathscr{A}$ is non-Bondy provided that $(2 \le)$ $2^{s-1} + 1 \le \  |\mathscr{A}| \; (\le 2^s)$.\end{prop}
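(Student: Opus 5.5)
This is the contrapositive of Proposition~\ref{1.3}(iv), so the plan is to prove that bound directly and conclude. Suppose, to the contrary, that $\mathscr{A}$ is a Bondy system with $|\mathscr{A}| \ge 2^{s-1}+1$. By definition there is an element $a \in S$ such that $a \in A$ whenever $A, A \cup \{a\} \in \mathscr{A}$. We aim to derive a contradiction by a pigeonhole count.

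First we partition $\mathcal{P}(S)$ into the $2^{s-1}$ pairs $\{B, B \cup \{a\}\}$ with $B \in \mathcal{P}(S \setminus \{a\})$. These pairs are pairwise disjoint and cover $\mathcal{P}(S)$, since every subset $X$ of $S$ lies in exactly the pair indexed by $B = X \setminus \{a\}$. Because $|\mathscr{A}| > 2^{s-1}$, the pigeonhole principle gives some $B \subseteq S \setminus \{a\}$ with both $B$ and $B \cup \{a\}$ in $\mathscr{A}$. Taking $A = B$, the Bondy property of $a$ then forces $a \in B$, contradicting $B \subseteq S \setminus \{a\}$. Hence $\mathscr{A}$ is non-Bondy. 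Equivalently, in terms of Proposition~\ref{2.2}, the same count shows that every $a \in S$ lies in $\lambda(\mathscr{A})$, so $\lambda(\mathscr{A}) = S$.

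The parenthetical bounds are immediate. We have $|\mathscr{A}| \le |\mathcal{P}(S)| = 2^s$, and since $s \ge 1$ we get $2^{s-1}+1 \ge 2$.

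There is no real obstacle here; the only point needing care is checking that the pairs $\{B, B \cup \{a\}\}$ really partition $\mathcal{P}(S)$, which holds because $B \mapsto \{B, B\cup\{a\}\}$ is a bijection from $\mathcal{P}(S\setminus\{a\})$ onto the set of pairs.
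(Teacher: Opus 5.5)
Your proof is correct and follows the paper's route: the paper obtains Proposition~\ref{2.4} as the contrapositive of Proposition~\ref{1.3}(iv), which it treats as well known. Your pigeonhole count over the $2^{s-1}$ pairs $\{B, B\cup\{a\}\}$ supplies the proof of that bound, which the paper omits.
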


\begin{theorem}\label{2.5} $|\mathscr{A}| \ge s+1$ for every non-Bondy system $\mathscr{A}$.\end{theorem}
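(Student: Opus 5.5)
This is the contrapositive of Theorem \ref{1.2}, so the plan is simply to invoke Bondy's theorem. Let $\mathscr{A}$ be a non-Bondy system and suppose, towards a contradiction, that $|\mathscr{A}| \le s$. Then Theorem \ref{1.2} applies directly and says that $\mathscr{A}$ is a Bondy system, which is the desired contradiction. Hence $|\mathscr{A}| \ge s+1$.

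Relying on Theorem \ref{1.2} as a black box leaves no real obstacle. For a self-contained argument, I would reprove Bondy's theorem by the standard graph argument, phrased via Proposition \ref{2.2}.

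Assume $\lambda(\mathscr{A}) = S$ with $|\mathscr{A}| = t \le s$. Build a graph $G$ on the vertex set $\mathscr{A}$. For each $a \in S$, choose one pair $A,\ A \cup \{a\}$ with both sets in $\mathscr{A}$ and $a \notin A$; such a pair exists by Proposition \ref{2.2}. Join these two vertices by an edge labelled $a$. Different labels give different edges, since the symmetric difference of the endpoints is exactly $\{a\}$. So $G$ has $s \ge t$ edges on $t$ vertices and therefore contains a cycle.

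Along any cycle, the symmetric differences of consecutive vertices must compose to the empty set. The labels on the cycle are distinct singletons, so their symmetric sum is nonempty. This contradiction shows that $\mathscr{A}$ is a Bondy system.

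The main point to check is the acyclicity step. Walking around a cycle $A_0, A_1, \dots, A_k = A_0$, we have $A_0 \,\triangle\, A_k = \triangle_i \{a_i\}$, where the $a_i$ are pairwise distinct labels. This equals $\{a_1,\dots,a_k\} \neq \emptyset$, whereas $A_0 \,\triangle\, A_0 = \emptyset$. So $G$ is a forest and has at most $t-1 < s$ edges, which contradicts the edge count above.
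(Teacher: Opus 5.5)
Your argument is correct and is exactly the paper's: Theorem~\ref{2.5} is just the contrapositive of Bondy's Theorem~\ref{1.2}. Your optional self-contained reproof of Bondy's theorem is also sound and goes beyond what the paper supplies: one labelled edge per $a \in S$, a cycle would force the symmetric sum of distinct singletons to be empty, so the graph is a forest, which cannot have $s \ge t$ edges on $t$ vertices.
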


\begin{proof} This is an equivalent form of \ref{1.2}.\end{proof}

\begin{example}\label{2.6} \rm The system $\mathscr{A} =  \{\emptyset, \{a\} : a \in S \}$ is non-Bondy and $|\mathscr{A}| = s + 1$.\end{example}

\begin{prop}\label{2.7} A non-Bondy system $\mathscr{A}$ is size-minimal if and only if $|\mathscr{A}| = s + 1$.\end{prop}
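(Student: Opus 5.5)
The plan is to pin down the minimum size of a non-Bondy system exactly; once that minimum is known to be $s+1$, both directions of the equivalence follow. By definition, a non-Bondy system $\mathscr{A}$ is size-minimal when $|\mathscr{A}| \le |\mathscr{B}|$ for every non-Bondy system $\mathscr{B}$. So I will show that
\[
\min\{|\mathscr{B}| : \mathscr{B} \text{ non-Bondy}\} = s+1 .
\]
The lower bound comes from Theorem \ref{2.5}, which gives $|\mathscr{B}| \ge s+1$ for every non-Bondy $\mathscr{B}$. The upper bound comes from Example \ref{2.6}, which supplies a non-Bondy system of size exactly $s+1$. Hence the minimum is attained and equals $s+1$.

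With this in hand, the two implications are immediate.

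\emph{Only if.} Suppose $\mathscr{A}$ is size-minimal. Comparing $\mathscr{A}$ with the system of Example \ref{2.6} gives $|\mathscr{A}| \le s+1$. Theorem \ref{2.5} gives $|\mathscr{A}| \ge s+1$. Therefore $|\mathscr{A}| = s+1$.

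\emph{If.} Suppose $\mathscr{A}$ is non-Bondy with $|\mathscr{A}| = s+1$. For any non-Bondy $\mathscr{B}$, Theorem \ref{2.5} gives $|\mathscr{B}| \ge s+1 = |\mathscr{A}|$, so $\mathscr{A}$ is size-minimal.

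The only step that needs any checking is that the system of Example \ref{2.6} really is non-Bondy; the paper states this, but I would confirm it via Proposition \ref{2.2}. For each $a \in S$, take $A = \emptyset \in \mathscr{A}$. Then $a \notin A$ and $A \cup \{a\} = \{a\} \in \mathscr{A}$, so $a \in \lambda(\mathscr{A})$. Hence $\lambda(\mathscr{A}) = S$, and $\mathscr{A}$ is non-Bondy. The sets $\emptyset, \{a\}$ for $a \in S$ are pairwise distinct, so $|\mathscr{A}| = s+1$.

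I see no real obstacle. All of the substance is in Bondy's theorem, which enters through Theorem \ref{2.5} and which I take as given.
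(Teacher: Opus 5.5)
Your proposal is correct and follows the paper's proof, which is simply ``Combine \ref{2.5} and \ref{2.6}'': Theorem \ref{2.5} gives the lower bound $s+1$, and Example \ref{2.6} shows that this bound is attained. Your extra check, via Proposition \ref{2.2}, that the system of Example \ref{2.6} is non-Bondy of size exactly $s+1$ is a harmless addition.
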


\begin{proof} Combine \ref{2.5} and \ref{2.6}.\end{proof}

\begin{prop}\label{2.8} The following conditions are equivalent for a~non-negative integer $t$:
\begin{enumerate}
\item[\rm(1)] There is at least one non-Bondy system $\mathscr{A}$ on $S$ such that $|\mathscr{A}| = t$.
\item[\rm(2)] $s + 1 \le t \le 2^s$.
\end{enumerate}\end{prop}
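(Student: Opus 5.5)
The implication (1)$\Rightarrow$(2) is immediate. If $\mathscr{A}$ is a non-Bondy system on $S$ with $|\mathscr{A}| = t$, then Theorem~\ref{2.5} gives $t \ge s+1$. On the other hand, $\mathscr{A} \subseteq \mathcal{P}(S)$, so $t \le |\mathcal{P}(S)| = 2^s$.

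For (2)$\Rightarrow$(1), take $t$ with $s+1 \le t \le 2^s$. The plan is to start from the system $\mathscr{A}_0 = \{\emptyset, \{a\} : a \in S\}$ of Example~\ref{2.6}. It is non-Bondy and $|\mathscr{A}_0| = s+1$. Since $\mathcal{P}(S) \setminus \mathscr{A}_0$ has exactly $2^s - (s+1) \ge t - (s+1)$ elements, we may choose a subset $\mathscr{C} \subseteq \mathcal{P}(S) \setminus \mathscr{A}_0$ with $|\mathscr{C}| = t - (s+1)$. Put $\mathscr{A} = \mathscr{A}_0 \cup \mathscr{C}$. Then $|\mathscr{A}| = t$ because the union is disjoint. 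Moreover $\mathscr{A}$ is non-Bondy by Proposition~\ref{2.3}(ii), since it contains the non-Bondy system $\mathscr{A}_0$.

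If one prefers to avoid \ref{2.3}(ii), the same conclusion follows directly from Proposition~\ref{2.2}. For each $a \in S$ we have $\emptyset \in \mathscr{A}$, $\{a\} = \emptyset \cup \{a\} \in \mathscr{A}$ and $a \notin \emptyset$, so $a \in \lambda(\mathscr{A})$. Hence $\lambda(\mathscr{A}) = S$.

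There is no real obstacle here. The only points needing care are that the added sets must be new, which is guaranteed by choosing them outside $\mathscr{A}_0$, and that there are enough of them, which is exactly the bound $t \le 2^s$.
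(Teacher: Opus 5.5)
Your proof is correct and takes the paper's route: the paper just says to combine the upward closure of non-Bondy systems (cited as \ref{2.2}(ii), evidently meaning \ref{2.3}(ii)) with the lower bound \ref{2.5}. You also make explicit what the paper leaves implicit, namely the starting system of size $s+1$ from Example~\ref{2.6} and the count of available new sets.
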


\begin{proof} Combine \ref{2.2}(ii) and \ref{2.5}.\end{proof}

We put $\varkappa(\mathscr{A}) = \{A \in \mathscr{A} : | A \div B| = 1$ for at least one $B \in \mathscr{A}\}$ for any system $\mathscr{A}$.

The following lemma is obvious.

\begin{lemma}\label{2.10} $\lambda(\varkappa(\mathscr{A})) = \lambda(\mathscr{A})$ and $\lambda_1(\varkappa(\mathscr{A})) = \lambda_1(\mathscr{A})$.\end{lemma}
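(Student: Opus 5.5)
The lemma says that removing from $\mathscr{A}$ every set that has no "neighbour" in $\mathscr{A}$ (no $B\in\mathscr{A}$ with $|A\div B|=1$) does not change the sets $\lambda$ and $\lambda_1$. I would argue directly from the definitions by showing that the witnesses used to compute $\lambda(\mathscr{A})$ and $\lambda_1(\mathscr{A})$ lie entirely inside $\varkappa(\mathscr{A})$.

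\textbf{Step 1 (pairs of witnesses).} For a system $\mathscr{C}$ and $a\in S$, let $P_a(\mathscr{C})$ be the set of all $A\in\mathscr{C}$ with $a\notin A$ and $A\cup\{a\}\in\mathscr{C}$. By definition, $a\in\lambda(\mathscr{C})$ if and only if $P_a(\mathscr{C})\neq\emptyset$, and $a\in\lambda_1(\mathscr{C})$ if and only if $|P_a(\mathscr{C})|=1$. Since $\lambda_1$ is determined by an exact count, I need equality of the witness sets, not just the fact that both are nonempty. So the key claim is
\[
P_a(\varkappa(\mathscr{A}))=P_a(\mathscr{A})\quad\text{for every } a\in S.
\]

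\textbf{Step 2 (the two inclusions).}
\begin{enumerate}
\item[(a)] $P_a(\varkappa(\mathscr{A}))\subseteq P_a(\mathscr{A})$. This holds because $\varkappa(\mathscr{A})\subseteq\mathscr{A}$.
\item[(b)] $P_a(\mathscr{A})\subseteq P_a(\varkappa(\mathscr{A}))$. Take $A\in P_a(\mathscr{A})$ and put $B=A\cup\{a\}$. Then $A,B\in\mathscr{A}$ and $A\div B=\{a\}$, so $|A\div B|=1$. Hence $A\in\varkappa(\mathscr{A})$, witnessed by $B$, and $B\in\varkappa(\mathscr{A})$, witnessed by $A$. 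Therefore $A\in P_a(\varkappa(\mathscr{A}))$.
\end{enumerate}

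\textbf{Step 3 (conclusion).} The claim holds for every $a\in S$. Taking "nonempty" gives $\lambda(\varkappa(\mathscr{A}))=\lambda(\mathscr{A})$, and taking "exactly one element" gives $\lambda_1(\varkappa(\mathscr{A}))=\lambda_1(\mathscr{A})$.

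The only point that needs care is in Step 2(b): both members of the pair $(A, A\cup\{a\})$ must be kept in $\varkappa(\mathscr{A})$, not just $A$. This is exactly what the symmetric-difference condition gives, since $|A\div B|=1$ is symmetric in $A$ and $B$. Apart from that, the argument consists of unwinding the definitions.
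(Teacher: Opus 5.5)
Your proof is correct. The paper gives no proof and calls the lemma obvious; your check that $P_a(\varkappa(\mathscr{A}))=P_a(\mathscr{A})$ for every $a$ (both $A$ and $A\cup\{a\}$ survive in $\varkappa(\mathscr{A})$ since $|A\div(A\cup\{a\})|=1$), together with your remark that equality of witness sets rather than mere non-emptiness is what settles $\lambda_1$, is exactly the definitional verification being left to the reader.
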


\begin{prop}\label{2.11} Let $\mathscr{A}$ be a non-Bondy system. Then: \begin{enumerate}
\item[\rm(i)] The system $\varkappa(\mathscr{A})$ is non-Bondy. 
\item[\rm(ii)] $\varkappa(\mathscr{A}) = \mathscr{A}$, provided that the system $\mathscr{A}$ is inclusion-minimal.\end{enumerate}\end{prop}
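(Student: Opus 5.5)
Both parts follow from Lemma \ref{2.10} together with the characterization of non-Bondy systems in Proposition \ref{2.2}.

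For (i), we first use that $\mathscr{A}$ is non-Bondy. By Proposition \ref{2.2} this means $\lambda(\mathscr{A}) = S$. Lemma \ref{2.10} gives $\lambda(\varkappa(\mathscr{A})) = \lambda(\mathscr{A}) = S$, and so Proposition \ref{2.2}, applied now to the system $\varkappa(\mathscr{A})$, shows that $\varkappa(\mathscr{A})$ is non-Bondy.

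To make the argument self-contained, I would also briefly justify the first equality of Lemma \ref{2.10}. The inclusion $\lambda(\varkappa(\mathscr{A})) \subseteq \lambda(\mathscr{A})$ holds because $\varkappa(\mathscr{A}) \subseteq \mathscr{A}$ and $\lambda$ is monotone. For the reverse inclusion, take $a \in \lambda(\mathscr{A})$. Then there are $A, A \cup \{a\} \in \mathscr{A}$ with $a \notin A$. These two sets have symmetric difference $\{a\}$, a singleton. Hence both lie in $\varkappa(\mathscr{A})$, and they witness $a \in \lambda(\varkappa(\mathscr{A}))$.

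For (ii), suppose $\mathscr{A}$ is inclusion-minimal among non-Bondy systems. By definition $\varkappa(\mathscr{A}) \subseteq \mathscr{A}$, and by (i) the system $\varkappa(\mathscr{A})$ is non-Bondy. Inclusion-minimality then forces $\varkappa(\mathscr{A}) = \mathscr{A}$.

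No step is a real obstacle. The only point needing care is the reverse inclusion in Lemma \ref{2.10}: one must note that both witnessing sets $A$ and $A \cup \{a\}$ lie in $\varkappa(\mathscr{A})$, not just one of them.
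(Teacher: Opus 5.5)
Your proof is correct and takes the same approach as the paper: (i) combines Lemma \ref{2.10} with Proposition \ref{2.2}, and (ii) uses $\varkappa(\mathscr{A}) \subseteq \mathscr{A}$ together with (i) and inclusion-minimality. You also verify $\lambda(\varkappa(\mathscr{A})) = \lambda(\mathscr{A})$, which the paper calls obvious, and that verification is correct.
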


\begin{proof} (i) By \ref{2.10} and \ref{2.2}, $\lambda(\varkappa(\mathscr{A})) = \lambda(\mathscr{A}) = S$. It remains to use \ref{2.2} once more.\newline
(ii) We have $\varkappa(\mathscr{A}) \subseteq \mathscr{A}$ and the assertion follows from (i).\end{proof}

\begin{prop}\label{2.12} Let $\mathscr{A}$ by a system (on $S$) such that $\varkappa(\mathscr{A}) = \mathscr{A}$ and $\lambda_1(\mathscr{A}) = S$. Then $\mathscr{A}$ is an inclusion-minimal non-Bondy system (on $S$).\end{prop}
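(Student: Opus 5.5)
Since $\lambda_1(\mathscr{A}) \subseteq \lambda(\mathscr{A}) \subseteq S$, the hypothesis $\lambda_1(\mathscr{A}) = S$ gives $\lambda(\mathscr{A}) = S$, so by \ref{2.2} the system $\mathscr{A}$ is non-Bondy. It remains to show inclusion-minimality. Let $\mathscr{C} \subseteq \mathscr{A}$ be non-Bondy. Suppose, for contradiction, that some $D \in \mathscr{A} \setminus \mathscr{C}$ exists. We will find an element $a \in S$ with $a \notin \lambda(\mathscr{C})$. By \ref{2.2}, $\mathscr{C}$ is then Bondy, which is a contradiction.

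The key step is to choose $a$. Since $\varkappa(\mathscr{A}) = \mathscr{A}$, we have $D \in \varkappa(\mathscr{A})$. So there is $B \in \mathscr{A}$ with $|D \div B| = 1$, say $D \div B = \{a\}$. Then either $B = D \cup \{a\}$ with $a \notin D$, or $D = B \cup \{a\}$ with $a \notin B$. In either case the pair $\{D, B\}$ witnesses $a \in \lambda(\mathscr{A})$. Since $a \in S = \lambda_1(\mathscr{A})$, this is the \emph{only} pair in $\mathscr{A}$ witnessing $a$. I will read the definition of $\lambda_1$ as saying there is exactly one $A \in \mathscr{A}$ with $a \notin A$ and $A \cup \{a\} \in \mathscr{A}$. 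That means exactly one unordered pair $\{A, A \cup \{a\}\}$ inside $\mathscr{A}$. So every pair $\{E, E \cup \{a\}\}$ with $a \notin E$ and both sets in $\mathscr{A}$ coincides with $\{D, B\}$, and hence contains $D$.

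Now consider any pair in $\mathscr{C}$ that would witness $a \in \lambda(\mathscr{C})$. It is also a pair in $\mathscr{A}$, because $\mathscr{C} \subseteq \mathscr{A}$. So it must be $\{D, B\}$, and in particular it contains $D$. But $D \notin \mathscr{C}$, so no such pair exists, and $a \notin \lambda(\mathscr{C})$. Therefore $\lambda(\mathscr{C}) \ne S$, and $\mathscr{C}$ is Bondy by \ref{2.2}. This contradiction shows $\mathscr{C} = \mathscr{A}$, so $\mathscr{A}$ is inclusion-minimal.

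The main obstacle is interpreting ``just one'' in the definition of $\lambda_1$ correctly. It must mean uniqueness of the smaller set $A$, and hence uniqueness of the whole pair; I will state this explicitly. Once that is settled, the argument is immediate. Note also that the proof uses only $D \in \varkappa(\mathscr{A})$ for the removed set $D$, which is exactly what the hypothesis $\varkappa(\mathscr{A}) = \mathscr{A}$ provides.
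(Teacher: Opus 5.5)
Your proof is correct and follows the paper's argument: both use $\varkappa(\mathscr{A}) = \mathscr{A}$ to give each member of $\mathscr{A}$ a partner at symmetric-difference distance $1$, and then use the uniqueness in $\lambda_1(\mathscr{A}) = S$ to force that member into any non-Bondy subsystem. The differences are cosmetic: you argue by contradiction and treat both orientations at once through the unordered pair $\{D,B\}$, while the paper argues directly and splits into the cases $|A|<|B|$ and $|B|\le|A|$.
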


\begin{proof} Clearly, $\lambda_1(\mathscr{A}) \subseteq \lambda(\mathscr{A})$, and so $\lambda(\mathscr{A}) = S$ and $\mathscr{A}$ is non-Bondy by \ref{2.2}.

Let $\mathscr{B}$ by a non-Bondy system such that $\mathscr{B} \subseteq \mathscr{A}$. Now, take any $A \in \mathscr{A}$. As $\varkappa(\mathscr{A}) = \mathscr{A}$, we find $B \in \mathscr{A}$ with $|A \div B| = 1$. Assume, firstly, that $|A| < |B|$. Then $B = A \cup \{a\}, a \in S \setminus A$. Since $\mathscr{B}$ is non-Bondy, $C \cup \{a\} \in \mathscr{B}$ for some $C \in \mathscr{B}, a \notin C$. But $a \in \lambda_1(\mathscr{A})$, and therefore $C = A, A \in \mathscr{B}$. On the other hand, if $|B| \le |A|$ then $|A| = |B| + 1, A = B \cup \{b\}, b \in S \setminus B$. Proceeding similarly as before, we find that $A \in \mathscr{B}$.

We have checked that $\mathscr{B} = \mathscr{A}$, and thus $\mathscr{A}$ is inclusion-minimal.\end{proof}

\begin{example}\label{2.13} \rm (cf. \ref{2.12}) Put $S = \{1, 2, \dots, 7\} (s = 7)$ and $ \mathscr{A} = \{A_1, A_2, A_3, A_4, B_1, B_2, C_1, C_2, D_1, D_2\}$, where $A_i = \{i\}, 1 \le i \le 4, B_1 = \{1,2\}, B_2 = \{3,4\}, C_1 = \{1,2,5\}, C_2 = \{3,4,5\}, D_1 = \{1,2,5,6\}$, and $D_2 = \{3,4,5,7\}$. We have $|\mathscr{A}| = 10$ and $\lambda(\mathscr{A}) = S$. By 2.2, $\mathscr{A}$ is a~non-Bondy system and we claim that $\mathscr{A}$ is inclusion-minimal.

For, let $\mathscr{B} \subseteq \mathscr{A}, \mathscr{B}$ non-Bondy. As $\lambda(\mathscr{B}) = S$, we see readily that either $\mathscr{A} \setminus \{C_1\} \subseteq \mathscr{B}$ or $\mathscr{A} \setminus \{C_2\} \subseteq \mathscr{B}$. If $C_1 \notin \mathscr{B}$ then $6 \notin \lambda(\mathscr{B})$, a~contradiction. Thus, $C_1 \in \mathscr{B}$ and, symmetrically, $C_2 \in \mathscr{B}$. It follows that $\mathscr{B} = \mathscr{A}$.

The system $\mathscr{A}$ is inclusion-minimal. In spite of this fact, we get $\lambda_1(\mathscr{A}) = \{1,2,3,4,6,7\} = S \setminus \{5\}$.\end{example}

Any system $\mathscr{A}$ satisfying $\varkappa(\mathscr{A}) = \mathscr{A}$ and $\lambda_1(\mathscr{A}) = S$ is an inclusion-minimal non-Bondy system. Such systems will be called \textit{slender non-Bondy systems} in what follows.

Given a positive integer $s$, we define by $\mathfrak{a}(s) (\mathfrak{b}(s)$, respectively) the set of integers $t$ such that there exists at least one inclusion-minimal (slender, respectively) non-Bondy system $\mathscr{A}, |\mathscr{A}| = t$, on an $s$-element set $S$.

\begin{prop}\label{2.16} {\rm(i)} $\mathfrak{a}(s) \subseteq \{s + 1, \dots, 2s\}$.\newline
{\rm(ii)} $\mathfrak{b}(s) \subseteq \mathfrak{a}(s)$.\end{prop}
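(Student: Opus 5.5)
Part (ii) is immediate from \ref{2.12}: every slender non-Bondy system is an inclusion-minimal non-Bondy system, so every size realized by a slender system is realized by an inclusion-minimal one, giving $\mathfrak{b}(s) \subseteq \mathfrak{a}(s)$.

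For (i), let $\mathscr{A}$ be an inclusion-minimal non-Bondy system with $t = |\mathscr{A}|$. The lower bound $t \ge s+1$ is \ref{2.5}. For the upper bound I will use \ref{2.2}: $\lambda(\mathscr{A}) = S$. For each $a \in S$ choose one \emph{witness pair} $(A_a, A_a \cup \{a\})$ with both sets in $\mathscr{A}$ and $a \notin A_a$. Let $\mathscr{B}$ be the union over $a \in S$ of these pairs, $\mathscr{B} = \{A_a, A_a \cup \{a\} : a \in S\}$. Then $\mathscr{B} \subseteq \mathscr{A}$ and every $a \in S$ lies in $\lambda(\mathscr{B})$, so $\lambda(\mathscr{B}) = S$ and $\mathscr{B}$ is non-Bondy by \ref{2.2}. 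Inclusion-minimality of $\mathscr{A}$ forces $\mathscr{B} = \mathscr{A}$, whence $t = |\mathscr{B}| \le 2s$.

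It remains to note that the bound is nontrivial only in the sense of counting: $\mathscr{B}$ has at most two sets per element of $S$, hence at most $2s$ elements. No step here seems to be a real obstacle; the only point needing care is that the witness pairs are chosen once for each $a$, so the count is at most $2s$ regardless of overlaps. Overlaps only decrease the count, and the lower bound handles the other side. Hence $\mathfrak{a}(s) \subseteq \{s+1, \dots, 2s\}$.
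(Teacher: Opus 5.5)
Your proof is correct and follows essentially the same argument as the paper. Part (i) takes the lower bound from \ref{2.5}, and for the upper bound picks one witness pair $A_a, A_a\cup\{a\}$ per $a\in S$, giving a non-Bondy subsystem of size at most $2s$ that must equal $\mathscr{A}$ by minimality. Part (ii) is immediate from \ref{2.12}.
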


\begin{proof} (i) Let $|S| = s$ and let $\mathscr{A}$ by a non-Bondy system on $S$. By \ref{2.5}, $t = |\mathscr{A}| \ge s + 1$. Now, if $\mathscr{A}$ were inclusion-minimal, we should have $\varkappa(\mathscr{A}) = \mathscr{A}$ (\ref{2.11}(iii)). Anyway, $\lambda(\mathscr{A}) = S$ and for each $a \in S$ we find $A_a, B_a \in \mathscr{A}$ such that $a \notin A_a$ and $B_a =  A_a \cup \{a\}$. Put  $\mathscr{B} = \{A_a, B_a : a \in S \}$. Then $\mathscr{B} \subseteq \mathscr{A}, \lambda(\mathscr{B}) = S$ and $\mathscr{B}$ is non-Bondy by 2.2. Clearly, $|\mathscr{B}| \le 2s$.\newline 
(ii) This is obvious.\end{proof}

\section{The case $1 \le s \le 6$}

\begin{obser}\label{3.1} \rm Let $S = \{1\} \; (s = 1)$. Then $\mathscr{A}_1 = \{\emptyset, \{1\}\}$ is the only non-Bondy system on $S$. We have
$\overline{\mathscr{A}_1} = \mathscr{A}_1$.\end{obser}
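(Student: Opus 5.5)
The plan is to argue directly from the characterization in \ref{2.2}, since $\mathcal{P}(S)$ is so small that everything can be listed. For $S=\{1\}$ we have $\mathcal{P}(S)=\{\emptyset,\{1\}\}$, so there are only four systems on $S$: $\emptyset$, $\{\emptyset\}$, $\{\{1\}\}$ and $\mathscr{A}_1=\{\emptyset,\{1\}\}$.

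First I will show that any non-Bondy system must be $\mathscr{A}_1$. The quickest route is Theorem \ref{2.5}: a non-Bondy system $\mathscr{A}$ satisfies $|\mathscr{A}|\ge s+1=2=|\mathcal{P}(S)|$, hence $\mathscr{A}=\mathcal{P}(S)=\mathscr{A}_1$. Alternatively, by \ref{2.2} we need $1\in\lambda(\mathscr{A})$. This means some $A\in\mathscr{A}$ has $1\notin A$ and $A\cup\{1\}\in\mathscr{A}$. The only such $A$ is $\emptyset$, which forces $\emptyset,\{1\}\in\mathscr{A}$.

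Next I will check that $\mathscr{A}_1$ really is non-Bondy. Taking $A=\emptyset$ gives $A\cup\{1\}=\{1\}\in\mathscr{A}_1$ with $1\notin A$. Hence $\lambda(\mathscr{A}_1)=\{1\}=S$, and \ref{2.2} applies; this is also the case $s=1$ of Example \ref{2.6}.

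Finally, complementation exchanges $\emptyset$ and $\{1\}$, so $\overline{\mathscr{A}_1}=\{S\setminus\emptyset,\,S\setminus\{1\}\}=\{\{1\},\emptyset\}=\mathscr{A}_1$. There is no real obstacle here; the only point needing care is correctly reading off $\lambda$ in the one-element case.
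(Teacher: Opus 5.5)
Your proposal is correct and is exactly the direct verification the paper leaves implicit, since Observation~\ref{3.1} is stated without proof: by \ref{2.2}, $1\in\lambda(\mathscr{A})$ forces $\emptyset,\{1\}\in\mathscr{A}$, and complementation swaps $\emptyset$ and $\{1\}$. Your alternative appeal to Theorem~\ref{2.5} is valid but unnecessary, because the $\lambda$-argument already gives uniqueness in one line.
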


\begin{obser}\label{3.2} \rm Let $S = \{1,2\} \; (s = 2)$. Then, up to isomorphism,\newline
$\mathscr{A}_1 = \{\emptyset, \{1\}, \{1,2\}\}$,\newline
$\mathscr{A}_2 = \{\emptyset, \{1\}, \{2\}\}$,\newline
$\mathscr{A}_3 = \{ \{1\}, \{2\}, \{1,2\}\}$\newline
are all inclusion-minimal non-Bondy systems on $S$. We have $|\mathscr{A_i}| = 3$ and so the systems are size-minimal, in fact. Besides, $\overline{\mathscr{A}_1} = \mathscr{A}_1, \overline{\mathscr{A}_2} = \mathscr{A}_3$.\end{obser}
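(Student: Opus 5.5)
By Proposition \ref{2.16}(i), every inclusion-minimal non-Bondy system on a $2$-element set has size $3$ or $4$. So the plan is a finite case check on $\mathcal{P}(S)=\{\emptyset,\{1\},\{2\},\{1,2\}\}$. I split by size, then verify the complement relations directly.

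\emph{Size $4$.} The only system of size $4$ is $\mathcal{P}(S)$ itself. It properly contains $\mathscr{A}_1$, which I show below is non-Bondy. Hence $\mathcal{P}(S)$ is not inclusion-minimal, and size $4$ is excluded.

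\emph{Size $3$.} By Theorem \ref{2.5}, every system of size at most $2$ is Bondy. Consequently any non-Bondy system of size $3$ is automatically inclusion-minimal, and indeed size-minimal by Proposition \ref{2.7}. It therefore suffices to list the four $3$-element systems, each obtained by omitting one member of $\mathcal{P}(S)$, and test each with Proposition \ref{2.2}, i.e.\ check that $\lambda=S$.
\begin{itemize}
\item Omitting $\{1,2\}$ gives $\mathscr{A}_2$. The pairs $\emptyset\subset\{1\}$ and $\emptyset\subset\{2\}$ put $1$ and $2$ into $\lambda$.
\item Omitting $\{2\}$ gives $\mathscr{A}_1$. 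The pairs $\emptyset\subset\{1\}$ and $\{1\}\subset\{1,2\}$ put $1$ and $2$ into $\lambda$.
\item Omitting $\{1\}$ gives the image of $\mathscr{A}_1$ under the transposition $1\leftrightarrow2$, so it is isomorphic to $\mathscr{A}_1$.
\item Omitting $\emptyset$ gives $\mathscr{A}_3$. The pairs $\{2\}\subset\{1,2\}$ and $\{1\}\subset\{1,2\}$ put $1$ and $2$ into $\lambda$.
\end{itemize}
All four are non-Bondy, so up to isomorphism the list $\mathscr{A}_1,\mathscr{A}_2,\mathscr{A}_3$ is complete, and each has size $3$.

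\emph{Complements.} Complementation sends $\emptyset\mapsto\{1,2\}$ and $\{1\}\mapsto\{2\}$. This gives $\overline{\mathscr{A}_1}=\{\{1,2\},\{2\},\emptyset\}$, which is isomorphic to $\mathscr{A}_1$ via $1\leftrightarrow2$; the stated equality is understood up to isomorphism. It also gives $\overline{\mathscr{A}_2}=\{\{1,2\},\{2\},\{1\}\}=\mathscr{A}_3$.

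There is no real obstacle here. The only point needing care is that the reduction to sizes $3$ and $4$ relies on Proposition \ref{2.16}(i) together with Theorem \ref{2.5}.
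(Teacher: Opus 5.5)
Your proof is correct, but it establishes inclusion-minimality by a different route than the paper.

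\textbf{Minimality.} The paper gives no argument beyond Remark \ref{3.7}, which checks each listed system against Proposition \ref{2.12}, i.e.\ verifies $\varkappa(\mathscr{A})=\mathscr{A}$ and $\lambda_1(\mathscr{A})=S$. You instead observe that a non-Bondy system of size $s+1=3$ is automatically inclusion-minimal. Every proper subsystem has at most $2=s$ members, so it is Bondy by Theorem \ref{2.5}. This is quicker and needs no computation of $\lambda_1$.

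\textbf{What each route buys.} Your route proves only inclusion-minimality. The paper's route also shows the systems are slender, which Scholium \ref{3.8}(ii) needs for $\mathfrak{b}(2)=\{3\}$. For $s=2$ that extra check is immediate: in each $\mathscr{A}_i$, every element of $S$ is added along exactly one edge. On the other hand, your exhaustive enumeration of the $3$-element systems, together with ruling out $\mathcal{P}(S)$, actually proves the completeness claim (``are all''). The paper leaves that to the reader.

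\textbf{Small remarks.}
\begin{enumerate}
\item Proposition \ref{2.16}(i) is not needed, since $|\mathcal{P}(S)|=4$ already bounds the size.
\item If ``up to isomorphism'' is read as also asserting that the list is irredundant, add one line. Permutations of $S$ fix $\emptyset$ and $S$, and the three systems differ in which of these they contain, so they are pairwise non-isomorphic.
\item Your reading of $\overline{\mathscr{A}_1}=\mathscr{A}_1$ as holding only up to isomorphism is right. The literal equality fails, and the paper itself writes $\cong$ in the analogous Observation \ref{3.3}.
\end{enumerate}
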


\begin{obser}\label{3.3} \rm Let $S = \{1,2,3\} \; (s= 3)$. Then, up to isomorphism,\newline 
$\mathscr{A}_1 = \{\emptyset, \{1\}, \{1,2\},\{1,2,3\}\}$,\newline
$\mathscr{A}_2 =\{\emptyset, \{1\}, \{1,2\}, \{1,3\}$,\newline
$\mathscr{A}_3 = \{\emptyset, \{1\}, \{2\},\{1,3\}\}$,\newline
$\mathscr{A}_4 = \{\emptyset, \{1\}, \{2\}, \{3\}\}$,\newline
$\mathscr{A}_5 = \{\{1\}, \{2\}, \{1,2\},\{1,2,3\}\}$,\newline
$\mathscr{A}_6 = \{\{1\}, \{1,3\}, \{2,3\}, \{1,2,3\}\}$,\newline
$\mathscr{A}_7 = \{ \{1,2\}, \{1,3\}, \{2,3\},\{1,2,3\}\}$,\newline
$\mathscr{A}_8 = \{\{1\}, \{2\}, \{1,2\}, \{1,3\}\}$\newline
are all inclusion-minimal non-Bondy systems on $S$. We have $| \mathscr{A}_i| = 4$ and the systems are size minimal. Besides, $\overline{\mathscr{A}_1} \cong  \mathscr{A}_1, \overline{\mathscr{A}_2} \cong  \mathscr{A}_5, \overline{\mathscr{A}_3} \cong  \mathscr{A}_6, \overline{\mathscr{A}_4} \cong  \mathscr{A}_7,$ and  $\overline{\mathscr{A}_8} \cong  \mathscr{A}_8.$\end{obser}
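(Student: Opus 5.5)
The claim in Observation \ref{3.3} has two parts: every listed system is an inclusion-minimal non-Bondy system, and up to isomorphism there are no others on $S=\{1,2,3\}$. The complement relations can be checked by direct computation.

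\textbf{Step 1: a general bound on size.} By Proposition \ref{2.16}(i), any inclusion-minimal non-Bondy system $\mathscr{A}$ satisfies $4\le|\mathscr{A}|\le 6$. By Proposition \ref{2.11}(ii), $\varkappa(\mathscr{A})=\mathscr{A}$. By the proof of \ref{2.16}(i), $\mathscr{A}=\{A_a,B_a: a\in S\}$ with $B_a=A_a\cup\{a\}$. So $\mathscr{A}$ is a union of three edges of the cube $\mathcal{P}(S)$, one in each coordinate direction. Moreover, no proper subfamily of $\mathscr{A}$ may already contain an edge in each direction.

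\textbf{Step 2: sharpen the bound to size $4$.} Choose one edge $e_a$ in each direction $a$. The graph on $\mathcal{P}(S)$ formed by $e_1,e_2,e_3$ has three edges.
\begin{itemize}
\item If it has $6$ vertices, the edges are pairwise disjoint. An edge in direction $a$ is determined by its projection onto $S\setminus\{a\}$, a $2$-element set of size $4$. I will show that three pairwise disjoint edges in the three directions cannot exist in the $3$-cube. Equivalently, any two edges in different directions share a vertex or force one more edge inside their union. I expect this to come down to parity: the $8$ vertices split by parity of $|A|$, each edge joins the two classes, and removing three disjoint edges would leave a configuration in which some direction reappears.
\item If it has $5$ vertices, it is a path of length $2$ plus a disjoint edge. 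I will show that the union of these $5$ vertices contains another triple of edges, one per direction, spanning only $4$ vertices. This contradicts minimality.
\end{itemize}
If both cases hold, every inclusion-minimal system has exactly $4$ elements, i.e.\ it is size-minimal, as Proposition \ref{2.7} requires.

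\textbf{Step 3: classify the $4$-element systems.} Three edges on $4$ vertices form a tree, namely a path $P_4$ or a star $K_{1,3}$, with all three directions represented. I will classify these up to the action of the cube's automorphism group (coordinate permutations and translations $A\mapsto A\div X$), and then refine to isomorphism of set systems, which allows only coordinate permutations.
\begin{itemize}
\item \emph{Stars.} A vertex together with its three neighbours: centre $\emptyset$ gives $\mathscr{A}_4$, centre $\{1,2,3\}$ gives $\mathscr{A}_7$, a centre of size $1$ gives $\mathscr{A}_8$, and a centre of size $2$ gives $\mathscr{A}_6$.
\item \emph{Paths.} A path whose direction sequence is a permutation of $1,2,3$ is either monotone, giving $\mathscr{A}_1$, or has a bend. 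The bent cases give $\mathscr{A}_2$, $\mathscr{A}_3$ and $\mathscr{A}_5$ according to the position of the starting vertex by level.
\end{itemize}
Each such system is slender: it has three edges in distinct directions, so $\lambda_1(\mathscr{A})=S$, and $\varkappa(\mathscr{A})=\mathscr{A}$. Proposition \ref{2.12} then gives inclusion-minimality. The complement isomorphisms are verified directly.

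\textbf{Main obstacle.} The hard step is Step 2, ruling out minimal systems of size $5$ or $6$. If the parity or case argument fails, I will fall back on an exhaustive check of all configurations of three edges, one per direction. There are $4^3=64$ such configurations, which reduce to a handful of cases under symmetry.
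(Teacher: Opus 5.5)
Your overall strategy is sound: reduce to three cube edges, one per direction; show the union has only four vertices; classify the resulting trees. For the completeness part (``are all'') it supplies an argument the paper does not give. The paper only asserts the list and, in Remark \ref{3.7}, points to Proposition \ref{2.12} for the listed systems being slender.

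However, Step 2 as you phrase it rests on a false claim. Three pairwise disjoint edges in the three directions do exist, e.g.\ $\{\emptyset,\{1\}\}$, $\{\{3\},\{2,3\}\}$, $\{\{1,2\},\{1,2,3\}\}$. So a non-existence proof, parity-based or otherwise, will fail. What you must show instead is that such a configuration is not minimal.

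The sentence you call ``equivalent'' is not equivalent, but it is exactly the lemma you need. Let $e$ be an edge in direction $a$ and $f$ an edge in direction $b\ne a$, disjoint, and let $c$ be the third direction. They cannot have the same $c$-coordinate, since then they would be a horizontal and a vertical edge of one square face and would meet. Hence the vertex of $e$ and the vertex of $f$ that agree in coordinates $a$ and $b$ differ only in $c$. So the four endpoints of $e,f$ already form a non-Bondy system.

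Three pairwise meeting edges span at most four vertices (a star, since the cube has no triangles). So whenever $|\mathscr{A}|\in\{5,6\}$, two of your chosen edges are disjoint, and their endpoints form a proper non-Bondy subsystem. This settles both cases at once; in the example above it yields $\{\emptyset,\{1\},\{3\},\{2,3\}\}$.

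In Step 3 your count of eight classes is right, but your identification with the listed systems is wrong:
\begin{itemize}
\item The star centred at $\{1\}$ is $\{\emptyset,\{1\},\{1,2\},\{1,3\}\}=\mathscr{A}_2$, and the star centred at $\{1,2\}$ is $\mathscr{A}_5$. Neither is $\mathscr{A}_8$ or $\mathscr{A}_6$.
\item The three non-monotone paths are $\mathscr{A}_3$, $\mathscr{A}_8$, $\mathscr{A}_6$. All of them join an antipodal pair $\{i\}$, $S\setminus\{i\}$, so the level of the starting vertex does not distinguish them. The levels of the two inner vertices do: $\{0,1\}$, $\{1,2\}$, $\{2,3\}$ respectively.
\end{itemize}
With this corrected, the complement relations are immediate. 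Complementation sends the star centred at $X$ to the star centred at $S\setminus X$ and replaces each level $k$ by $3-k$.

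Finally, inclusion-minimality of a $4$-element non-Bondy system does not need Proposition \ref{2.12}. Every proper subsystem has at most $3=s$ elements and is therefore Bondy by Theorem \ref{1.2}.
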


\begin{obser}\label{3.4} \rm Let $S = \{1,2,3,4\} \; (s = 4)$. The following systems are all inclusion-minimal non-Bondy systems:\newline
$\mathscr{A}_1 = \{\emptyset, \{1\}, \{2\}, \{3\},\{4\}\}$, $|\mathscr{A}_1|=5$,\newline
$\mathscr{A}_2 = \{\emptyset, \{1\}, \{2\}, \{1,2,3\}, \{1,2,3,4\}\}$, $|\mathscr{A}_2|=5$,\newline
$\mathscr{A}_3 = \{ \{1\}, \{2\}, \{3\},$ $\{4\}, \{1,2\}, \{3,4\}\}$, $|\mathscr{A}_3|=6$,\newline
$\mathscr{A}_4 = \{\emptyset, \{1\}, \{1,2\}, \{1,2,3\},\{1,2,3,4\}\}$, $|\mathscr{A}_4|=5$.\end{obser}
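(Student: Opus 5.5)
The plan is to verify the hypotheses of Proposition~\ref{2.12} for each listed system, namely $\varkappa(\mathscr{A}_i)=\mathscr{A}_i$ and $\lambda_1(\mathscr{A}_i)=S$. Proposition~\ref{2.12} then gives at once that $\mathscr{A}_i$ is non-Bondy and inclusion-minimal. The cardinalities are read off directly from the lists. For each system I will enumerate all pairs $A,B\in\mathscr{A}_i$ with $|A\div B|=1$. Every member of $\mathscr{A}_i$ must occur in such a pair, which is the condition on $\varkappa$. For every $a\in S$ there must be exactly one such pair of the form $A,\;A\cup\{a\}$ with $a\notin A$, which is the condition on $\lambda_1$.

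For $\mathscr{A}_1$ the adjacent pairs are exactly $\emptyset,\{a\}$ for $a=1,2,3,4$. So every member is covered, and each $a$ is realized exactly once. This is just Example~\ref{2.6}.

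For $\mathscr{A}_4$, a chain, the adjacent pairs are $\emptyset\subset\{1\}\subset\{1,2\}\subset\{1,2,3\}\subset\{1,2,3,4\}$. These realize $a=1,2,3,4$ once each. Any non-consecutive pair differs in at least two elements, so it contributes nothing.

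For $\mathscr{A}_3$ the adjacent pairs are $\{1\},\{1,2\}$ (realizing $2$), $\{2\},\{1,2\}$ (realizing $1$), $\{3\},\{3,4\}$ (realizing $4$) and $\{4\},\{3,4\}$ (realizing $3$). Singletons differ pairwise in two elements. Moreover $\{1,2\}$ and $\{3,4\}$ are at distance at least three from each other and from the foreign singletons. So both conditions of Proposition~\ref{2.12} hold.

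The only real obstacle is $\mathscr{A}_2$, and here I expect the printed list to need correction. Read literally, the adjacent pairs in $\{\emptyset,\{1\},\{2\},\{1,2,3\},\{1,2,3,4\}\}$ are only $\emptyset,\{1\}$ and $\emptyset,\{2\}$ and $\{1,2,3\},\{1,2,3,4\}$. Hence $3\notin\lambda(\mathscr{A}_2)$, and by Proposition~\ref{2.2} this system is Bondy, so it cannot be used as stated. The intended system is presumably something like $\{\emptyset,\{1\},\{2\},\{2,3\},\{2,3,4\}\}$. Its adjacent pairs $\emptyset\{1\}$, $\emptyset\{2\}$, $\{2\}\{2,3\}$ and $\{2,3\}\{2,3,4\}$ realize $1,2,3,4$ exactly once each and cover every member. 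So Proposition~\ref{2.12} applies to it in the same way.

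I would therefore present the proof as the routine case check above, together with a remark that $\mathscr{A}_2$ must be corrected before it satisfies the statement.
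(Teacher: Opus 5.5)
Your approach is the paper's own (Remark~\ref{3.7}): verify $\varkappa(\mathscr{A}_i)=\mathscr{A}_i$ and $\lambda_1(\mathscr{A}_i)=S$ and apply Proposition~\ref{2.12}, and your checks for $\mathscr{A}_1$, $\mathscr{A}_3$, $\mathscr{A}_4$ are correct. Your objection to $\mathscr{A}_2$ is also right and exposes an erratum that Remark~\ref{3.7} glosses over: the only members omitting $3$ are $\emptyset,\{1\},\{2\}$, and none of $\{3\},\{1,3\},\{2,3\}$ is in the printed system, so $3\notin\lambda(\mathscr{A}_2)$ and the system is Bondy, whereas your replacement $\{\emptyset,\{1\},\{2\},\{2,3\},\{2,3,4\}\}$ is indeed slender.
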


\begin{obser}\label{3.5} \rm Let $S = \{1,2,3,4,5\} \;(s = 5)$. The systems\newline
$\mathscr{A}_1 = \{\emptyset, \{1\}, \{2\}, \{3\}, \{4\}, \{5\}\}$,\newline 
$\mathscr{A}_2 = \{\emptyset, \{1\}, \{1,2\}, \{1,2,3\}, \{1,2,3,4\},\{1,2,3,4,5\}\}$,\newline
$\mathscr{A}_3 = \{\{1\}, \{2\}, \{4\}, \{5\}, \{1,2\}, \{1,3\}, \{4,5\}\}$,\newline
$\mathscr{A}_4 = \{\{1\},\{2\},\{3\},\{4\}, \{1,2\}, \{3,4\}, \{1,2,3,4\},\{1,2,3,4,5\}\}$,\newline
$\mathscr{A}_5 = \{\{1,2\}, \{2,3\},\{3,4\}, \{4,5\}, \{1,5\}, \{1,2,4\}, \{1,3,4\}, \{1,3,5\}$,\newline
$\{2,3,5\}, \{2,4,5\}\}$\newline
are inclusion-minimal non-Bondy systems, $|\mathscr{A}_1| = 6 = |\mathscr{A}_2|, |\mathscr{A}_3|= 7, |\mathscr{A}_4| = 8, |\mathscr{A}_5| = 10.$\end{obser}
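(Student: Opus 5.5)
The plan is to verify each of the five listed systems with Proposition \ref{2.12} rather than with the definition of inclusion-minimality. For each $\mathscr{A}_i$ I will show two things: $\varkappa(\mathscr{A}_i)=\mathscr{A}_i$, meaning every member has a partner in the system at symmetric-difference distance $1$; and $\lambda_1(\mathscr{A}_i)=S$, meaning each element $a\in S$ is added in exactly one covering pair $A\subset A\cup\{a\}$ inside the system. By Proposition \ref{2.12}, each $\mathscr{A}_i$ is then a slender, hence inclusion-minimal, non-Bondy system. The sizes $6,6,7,8,10$ are read off directly. So the proof reduces to listing, for each system, all pairs of members differing in exactly one element.

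The pair lists I expect are as follows.
\begin{enumerate}
\item[\rm(1)] For $\mathscr{A}_1$, the pairs are $\emptyset\subset\{a\}$, and each $a$ is added once.
\item[\rm(2)] For the chain $\mathscr{A}_2$, the consecutive pairs add $1,2,3,4,5$ in turn, each once.
\item[\rm(3)] For $\mathscr{A}_3$, the pairs are $\{1\}\subset\{1,2\}$ (adds $2$), $\{2\}\subset\{1,2\}$ (adds $1$), $\{1\}\subset\{1,3\}$ (adds $3$), $\{4\}\subset\{4,5\}$ (adds $5$) and $\{5\}\subset\{4,5\}$ (adds $4$).
\item[\rm(4)] For $\mathscr{A}_4$, the pairs are $\{1\},\{2\}\subset\{1,2\}$ (adding $2$ and $1$), $\{3\},\{4\}\subset\{3,4\}$ (adding $4$ and $3$), and $\{1,2,3,4\}\subset S$ (adding $5$). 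Note that $\{1,2\}$ and $\{3,4\}$ are at distance $2$ from $\{1,2,3,4\}$, so they contribute nothing further.
\item[\rm(5)] For $\mathscr{A}_5$, the only $2$-subsets of the $3$-sets lying in the system are the cyclic edges. This gives $\{1,2\}\subset\{1,2,4\}$ (adds $4$), $\{3,4\}\subset\{1,3,4\}$ (adds $1$), $\{1,5\}\subset\{1,3,5\}$ (adds $3$), $\{2,3\}\subset\{2,3,5\}$ (adds $5$) and $\{4,5\}\subset\{2,4,5\}$ (adds $2$).
\end{enumerate}
In each case every member occurs in some listed pair, which gives $\varkappa(\mathscr{A}_i)=\mathscr{A}_i$, and each element of $S$ occurs exactly once as the added element, which gives $\lambda_1(\mathscr{A}_i)=S$.

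The only real work is making sure each list is exhaustive. A missed pair would make some element's count exceed one, and then Proposition \ref{2.12} would no longer apply. The delicate case is $\mathscr{A}_5$, where one must check all $\binom{3}{2}=3$ two-element subsets of each of the five $3$-sets against the five edges $\{1,2\},\{2,3\},\{3,4\},\{4,5\},\{1,5\}$. One must also confirm that no two members of the same cardinality are at distance $1$, which is automatic since such sets differ in an even number of elements. I would organize this as a short table per system; if some count failed, I would fall back on the direct argument of Example \ref{2.13}, but I do not expect that to be necessary.
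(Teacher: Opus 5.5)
Your proposal is correct and follows the paper's own route: Remark \ref{3.7} likewise invokes Proposition \ref{2.12} after checking $\varkappa(\mathscr{A}_i)=\mathscr{A}_i$ and $\lambda_1(\mathscr{A}_i)=S$. Your pair lists, including the one for $\mathscr{A}_5$, are exhaustive and correct, since only members whose cardinalities differ by exactly one can be at distance $1$.
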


\begin{obser}\label{3.6} \rm Let $S = \{1,2,3,4,5,6\} \; (s = 6)$. The systems\newline
$\mathscr{A}_1 = \{\emptyset, \{1\}, \{2\}, \{3\}, \{4\}, \{5\}, \{6\}\}$,\newline
$\mathscr{A}_2 = \{ \{1\}, \{2\}, \{1,2\}, \{1,3\}, \{4\},\{5\}, \{4,5\},\{4,5,6\}\}$,\newline
$\mathscr{A}_3 = \{\{1\}, \{2\}, \{1,2\}, \{3\}, \{4\}, \{3,4\}, \{1,2,3,4\},\{1,2,3,4,5\}, \{4,6\}\}$,\newline
$\mathscr{A}_4 =  \{\{1\}, \{2\}, \{1,2\}, \{3\}, \{4\}, \{3,4\}, \{5\}, \{5,6\}$, $\{1,2,3,4\}$,\newline
$\{1,2,3,4,5\}\}$,\newline
$\mathscr{A}_5 = \{\{1,2\}, \{2,3\}, \{3,4\}, \{4,5\}, \{1,5\}, \{1,2,4\},\{2,3,5\}, \{1,3,4\}$,\newline
$\{2,4,5\}, \{1,3,5\}, \{1,3,5,6\}\}$,\newline
$\mathscr{A}_6 = \{\emptyset, \{6\}, \{1,2\},\{2,3\}, \{3,4\}, \{4,5\}, \{1,5\}, \{1,2,4\}, \{2,3,5\}$,\newline
$\{1,3,4\},\{ 2,4,5\}, \{1,3,5\}\}$\newline
are inclusion-minimal non-Bondy systems, $|\mathscr{A}_1| = 7, |\mathscr{A}_2| = 8, |\mathscr{A}_3| = 9, |\mathscr{A}_4| = 10, |\mathscr{A}_5| =11, |\mathscr{A}_6| = 12$. We have $S \notin \mathscr{A}_i$.\end{obser}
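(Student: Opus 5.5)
The statement to verify is Observation \ref{3.6}: each of the six listed systems $\mathscr{A}_1,\dots,\mathscr{A}_6$ on $S=\{1,\dots,6\}$ is an inclusion-minimal non-Bondy system, with the stated sizes, and $S\notin\mathscr{A}_i$.

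The sizes and $S\notin\mathscr{A}_i$ are read off by counting. For minimality, the first tool will be Proposition \ref{2.12}: a system with $\varkappa(\mathscr{A})=\mathscr{A}$ and $\lambda_1(\mathscr{A})=S$ is slender, hence inclusion-minimal non-Bondy. For each $\mathscr{A}_i$ I will list all pairs $(A,A\cup\{a\})$ inside $\mathscr{A}_i$ with $a\notin A$, and label each pair by $a$. I then check two things. First, every member of $\mathscr{A}_i$ occurs in some pair, which gives $\varkappa=\mathscr{A}_i$. Second, every label $a\in S$ occurs, which gives $\lambda=S$, and I note whether each label occurs exactly once.

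For $\mathscr{A}_1$ the pairs are $(\emptyset,\{a\})$, each label occurring once, so \ref{2.12} applies. For $\mathscr{A}_2$ the pairs are:
\begin{itemize}
\item $\{1\}\to\{1,2\}$ and $\{1\}\to\{1,3\}$, giving labels $2$ and $3$;
\item $\{2\}\to\{1,2\}$, giving label $1$;
\item $\{4\}\to\{4,5\}$, giving label $5$;
\item $\{5\}\to\{4,5\}$, giving label $4$;
\item $\{4,5\}\to\{4,5,6\}$, giving label $6$.
\end{itemize}
Each label appears once, so \ref{2.12} applies. For $\mathscr{A}_3$ and $\mathscr{A}_4$ I do the same bookkeeping. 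For instance, in $\mathscr{A}_4$ the chain $\{3,4\}\to\{1,2,3,4\}$ is not a single-element step, so I must verify carefully that $\{1,2,3,4\}$ still lies in some pair, namely with $\{1,2,3,4,5\}$. I must also check whether some label, such as $4$ in $\mathscr{A}_3$, appears twice ($\{3\}\to\{3,4\}$ and $\emptyset$-free $\{6\}$?). If a label appears twice, \ref{2.12} fails and I fall back on a direct argument in the style of Example \ref{2.13}.

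That fallback runs as follows. Suppose $\mathscr{B}\subseteq\mathscr{A}_i$ is non-Bondy. Then $\lambda(\mathscr{B})=S$ by \ref{2.2}. Every label occurring exactly once forces both members of its unique pair into $\mathscr{B}$. For the remaining labels I argue that each alternative pair still requires the forced-out members via another unique label. This is exactly how \ref{2.13} handled $C_1$, $C_2$ through label $6$.

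$\mathscr{A}_5$ and $\mathscr{A}_6$ are built on the pentagon system $\mathscr{A}_5$ of Observation \ref{3.5}. For that system I first check minimality on $\{1,\dots,5\}$. It is 5-fold cyclically symmetric under $i\mapsto i+1$, which reduces the pair list to one orbit. In the pentagon, each label has exactly two pairs, for example label $1$ comes from $\{2,4\}$? Since $\{2,4\}$ is not in the system, I must enumerate carefully; I expect each 2-set $\{i,i+1\}$ to extend to $\{i,i+1,j\}$ by the vertex opposite. So \ref{2.12} will not apply, and a cyclic counting argument is needed. Take a proper subsystem $\mathscr{B}$ and remove one set from it. By symmetry it suffices to treat the removal of one 2-set and of one 3-set. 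In each case I exhibit a label $a$ all of whose pairs used the removed set, so that $a\notin\lambda(\mathscr{B})$.

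For the six-element versions the new label $6$ occurs only in the pair $\{1,3,5\}\to\{1,3,5,6\}$, respectively $\emptyset\to\{6\}$. This forces those two sets into $\mathscr{B}$. The remaining sets do not involve $6$, so the pentagon argument on $\{1,\dots,5\}$ applies verbatim. In $\mathscr{A}_6$, I also check that $\emptyset$ and $\{6\}$ create no new pairs with the pentagon sets; this holds because all pentagon sets have size at least $2$.

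The main obstacle is the pentagon case, where no label is unique and minimality must be shown by a genuine case analysis of removals. I would first try to settle it by establishing, for each set $X$ in the pentagon, a label all of whose occurrences use $X$.
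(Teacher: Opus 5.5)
Your strategy is the paper's own. Remark~\ref{3.7} justifies the observation only by noting that $\varkappa(\mathscr{A}_i)=\mathscr{A}_i$ and $\lambda_1(\mathscr{A}_i)=S$ and invoking \ref{2.12}. Your pair lists for $\mathscr{A}_1$ and $\mathscr{A}_2$ are exactly that check.

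Your assessment of the pentagon, however, is wrong, and it creates an obstacle that does not exist. Two distinct sets of the same size have a symmetric difference of even size. So the only pairs in the pentagon are between a $2$-set and a $3$-set, and each listed $2$-set lies in exactly one listed $3$-set:
$\{1,2\}\subset\{1,2,4\}$, $\{2,3\}\subset\{2,3,5\}$, $\{3,4\}\subset\{1,3,4\}$, $\{4,5\}\subset\{2,4,5\}$, $\{1,5\}\subset\{1,3,5\}$.
The labels of these pairs are $4,5,1,2,3$. Every member lies in a pair and every label in $\{1,\dots,5\}$ occurs exactly once, not twice. This is what your own guess that each edge extends by its opposite vertex gives, since edge $\mapsto$ opposite vertex is a bijection. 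The pentagon is therefore slender.

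Adding $\{1,3,5,6\}$, or adding $\emptyset$ and $\{6\}$, creates exactly one new pair, with label $6$, and no other pairs. Hence $\mathscr{A}_5$ and $\mathscr{A}_6$ are slender and \ref{2.12} applies to them directly. Your removal-based fallback is still valid, because removing any member kills the unique label of its pair. So no conclusion of yours is false, but the claim that \ref{2.12} ``will not apply'' should be deleted.

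The same goes for your worry about $\mathscr{A}_3$: it contains no $\{6\}$. Label $4$ occurs only in $\{3\}\subset\{3,4\}$, and $\{4\}\subset\{4,6\}$ supplies label $6$. The bookkeeping for $\mathscr{A}_3$ and $\mathscr{A}_4$, once actually carried out, shows each label exactly once. All six systems are slender, so the fallback in the style of \ref{2.13} is never needed.
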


\begin{remark}\label{3.7} \rm One can use \ref{2.12} to verify that all the systems $\mathscr{A}$ designed in \ref{3.2}, \ref{3.3}, $\dots$, \ref{3.6} are slender non-Bondy. It is easy to check that $\varkappa(\mathscr{A}) = \mathscr{A}$ and $\lambda_1(\mathscr{A}) = S$ in all the cases.\end{remark}

\begin{scholium}\label{3.8} {\rm (i)} $\mathfrak{a}(1) = \{2\} = \mathfrak{b}(1)$.\begin{enumerate} 
\item[\rm(ii)] $\mathfrak{a}(2) = \{3\} = \mathfrak{b}(2)$.
\item[\rm(iii)] $\mathfrak{a}(3) = \{4\} = \mathfrak{b}(3)$.
\item[\rm(iv)] $\mathfrak{a}(4) = \{5,6\} = \mathfrak{b}(4)$.
\item[\rm(v)] $\mathfrak{a}(5) = \{6,7,8,10\} = \mathfrak{b}(5)$.
\end{enumerate}\end{scholium}

\section{Main result}

\begin{lemma}\label{4.1} Let $S_i, i = 1, 2$ be disjoint finite sets, $s_i = |S_i| \ge 1$ and let $\mathscr{A}_i$ be inclusion-minimal non-Bondy systems on $S_i, \emptyset \notin \mathscr{A}_i$. Put $\mathscr{A} = \mathscr{A}_1 \cup \mathscr{A}_2$. Then:\begin{enumerate}
\item[\rm(i)] $\mathscr{A}$ is an inclusion-minimal non-Bondy system defined on the (disjoint) union $S = S_1 \cup S_2$.
\item[\rm(ii)] $s = |S| = s_1 + s_2$ and $\mathscr{A}| = |\mathscr{A}_1| + |\mathscr{A}_2|$.
\item[\rm(iii)] $\emptyset \notin \mathscr{A}$ and $S \notin \mathscr{A}$.
\item[\rm(iv)] $\lambda_1(\mathscr{A}) = \lambda_1(\mathscr{A}_1) \cup \lambda_1(\mathscr{A}_2)$.
\item[\rm(v)] If both $\mathscr{A}_i$ are slender then $\mathscr{A}$ is slender as well.\end{enumerate}\end{lemma}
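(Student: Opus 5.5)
The whole argument rests on one observation. Since $\emptyset\notin\mathscr{A}_i$, every member of $\mathscr{A}_i$ is a non-empty subset of $S_i$. We claim that whenever $A$ and $A\cup\{a\}$ with $a\notin A$ both lie in $\mathscr{A}=\mathscr{A}_1\cup\mathscr{A}_2$, the two sets lie in the same $\mathscr{A}_i$, and then $a\in S_i$.

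To see this, suppose $A\in\mathscr{A}_1$ and $A\cup\{a\}\in\mathscr{A}_2$. Then $A\cup\{a\}\subseteq S_2$ contains the non-empty set $A\subseteq S_1$, which is impossible since $S_1\cap S_2=\emptyset$. The case $A\in\mathscr{A}_2$, $A\cup\{a\}\in\mathscr{A}_1$ is symmetric. Once both sets lie in $\mathscr{A}_i$, we get $a\in A\cup\{a\}\subseteq S_i$.

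The same observation applies to any subsystem $\mathscr{B}\subseteq\mathscr{A}$. Writing $\mathscr{B}_i=\mathscr{B}\cap\mathscr{A}_i$, it gives
$$\lambda(\mathscr{B})=\lambda(\mathscr{B}_1)\cup\lambda(\mathscr{B}_2),\qquad \lambda(\mathscr{B}_i)\subseteq S_i .$$
Here $\lambda(\mathscr{B}_i)$ is computed on $S_i$, and since the $S_i$ are disjoint this union is disjoint.

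For $\lambda_1$, fix $a\in S_1$. Every pair $(A,A\cup\{a\})$ in $\mathscr{A}$ comes from $\mathscr{A}_1$, so the number of such pairs in $\mathscr{A}$ equals the number in $\mathscr{A}_1$; the same holds for $a\in S_2$. This gives (iv).

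For (ii), note that $\mathscr{A}_1\cap\mathscr{A}_2=\emptyset$, because a non-empty subset of $S_1$ is never a subset of $S_2$. Hence $|\mathscr{A}|=|\mathscr{A}_1|+|\mathscr{A}_2|$, and $s=s_1+s_2$ is trivial.

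For (iii), $\emptyset\notin\mathscr{A}$ by hypothesis. Also $S\notin\mathscr{A}$, since every member of $\mathscr{A}$ lies inside a single $S_i$ while both $S_1$ and $S_2$ are non-empty.

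For (i), first show $\mathscr{A}$ is non-Bondy. By \ref{2.2} applied on $S_i$ we have $\lambda(\mathscr{A}_i)=S_i$. The displayed formula then gives $\lambda(\mathscr{A})=S$, so $\mathscr{A}$ is non-Bondy by \ref{2.2}.

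For minimality, let $\mathscr{B}\subseteq\mathscr{A}$ be non-Bondy on $S$. Then $\lambda(\mathscr{B})=S$. Because $\lambda(\mathscr{B}_i)\subseteq S_i$ and the $S_i$ are disjoint, this forces $\lambda(\mathscr{B}_i)=S_i$ for $i=1,2$. So each $\mathscr{B}_i$ is a non-Bondy system on $S_i$ contained in $\mathscr{A}_i$. By inclusion-minimality of $\mathscr{A}_i$ we get $\mathscr{B}_i=\mathscr{A}_i$, and therefore $\mathscr{B}=\mathscr{A}$.

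For (v), first check $\varkappa(\mathscr{A})=\mathscr{A}$. Every $A\in\mathscr{A}_i$ has a partner $B\in\mathscr{A}_i\subseteq\mathscr{A}$ with $|A\div B|=1$, since $\varkappa(\mathscr{A}_i)=\mathscr{A}_i$. Next, (iv) gives $\lambda_1(\mathscr{A})=S_1\cup S_2=S$. Then \ref{2.12} shows that $\mathscr{A}$ is slender.

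The only delicate point is the separation observation at the start, which is exactly where the hypothesis $\emptyset\notin\mathscr{A}_i$ is used. Without it, a pair such as $\emptyset\in\mathscr{A}_1$ and $\{a\}\in\mathscr{A}_2$ could create a cross pair, and both minimality and the $\lambda_1$ count would fail.
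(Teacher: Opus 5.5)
Your proof is correct and takes the same approach as the paper. You split $\mathscr{B}=\mathscr{B}_1\cup\mathscr{B}_2$ with $\mathscr{B}_i=\mathscr{B}\cap\mathscr{A}_i$, use $\lambda(\mathscr{B})=\lambda(\mathscr{B}_1)\cup\lambda(\mathscr{B}_2)$ to force $\lambda(\mathscr{B}_i)=S_i$, and then invoke the minimality of each $\mathscr{A}_i$. Beyond the paper, you make explicit the no-cross-pair observation that uses $\emptyset\notin\mathscr{A}_i$ (the paper only says ``apparently''), and you spell out (ii)--(v), which the paper leaves as easy.
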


\begin{proof} Apparently, $\lambda(\mathscr{A}) = \lambda(\mathscr{A}_1) \cup \lambda(\mathscr{A}_2) = S_1 \cup S_2 = S$, and so $\mathscr{A}$ is non-Bondy (on $S$) by \ref{2.2}. Now, if $\mathscr{B}$ were a non-Bondy system defined on $S$ such that $\mathscr{B} \subseteq \mathscr{A}$ then $\mathscr{B} = \mathscr{B}_1 \cup \mathscr{B}_2, \mathscr{B}_i = \mathscr{B} \cap \mathscr{A}_i, S = \lambda(\mathscr{B}) = \lambda(\mathscr{B}_1) \cup \lambda(\mathscr{B}_2), \lambda(\mathscr{B}_i) = S_i, \mathscr{B}_i$ are non-Bondy on $S_i, \mathscr{B}_i =  \mathscr{A}_i, \mathscr{B}= \mathscr{A}$. The rest is easy to see.\end{proof}

\begin{corollary}\label{4.2} Let $s_1, s_2$ be positive integers. Then $\mathfrak{a}(s_1) + \mathfrak{a}(s_2) \subseteq \mathfrak{a}(s_1 + s_2)$ and $\mathfrak{b}(s_1) + \mathfrak{b}(s_2) \subseteq \mathfrak{b}(s_1 + s_2)$.\end{corollary}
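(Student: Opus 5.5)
The plan is not to apply Lemma~\ref{4.1} directly, because its hypothesis $\emptyset \notin \mathscr{A}_i$ may fail. Instead I would prove a ``stacked'' variant of it that needs no such hypothesis. The preliminary tool is \emph{translation}. For $C \subseteq S_1$ put $\mathscr{A}_1 \div C = \{A \div C : A \in \mathscr{A}_1\}$. The map $X \mapsto X \div C$ is a bijection of $\mathcal{P}(S_1)$ that preserves $|X \div Y|$. A pair $A, A \cup \{a\}$ is sent to a pair of sets differing exactly in the coordinate $a$. Hence translation preserves $|\mathscr{A}_1|$, $\varkappa$, $\lambda$ and $\lambda_1$. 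By \ref{2.2} it also preserves non-Bondyness, inclusion-minimality (it is an inclusion-preserving bijection on systems) and slenderness.

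The construction is as follows. Given $\mathscr{A}_i$ inclusion-minimal (resp.\ slender) non-Bondy on $S_i$ with $|\mathscr{A}_i| = t_i$, I first translate so that $S_1 \notin \mathscr{A}_1$ and $\emptyset \notin \mathscr{A}_2$. To get $S_1 \notin \mathscr{A}_1 \div C$ we need some $C$ with $S_1 \div C \notin \mathscr{A}_1$. Such a $C$ exists as soon as $\mathscr{A}_1 \ne \mathcal{P}(S_1)$. By \ref{2.16}(i), $|\mathscr{A}_1| \le 2s_1 < 2^{s_1}$ when $s_1 \ge 3$, and for $s_1 = 2$ we have $|\mathscr{A}_1| = 3 < 4$ by \ref{3.8}. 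So this works whenever $s_1 \ge 2$, and similarly for $\mathscr{A}_2$ whenever $s_2 \ge 2$. Next I set
\[
\mathscr{A} = \mathscr{A}_1 \cup \{S_1 \cup B : B \in \mathscr{A}_2\}.
\]
Its two parts are disjoint and their sets never differ in exactly one element. Indeed, $A \in \mathscr{A}_1$ and $S_1 \cup B$ with $|A \div (S_1 \cup B)| \le 1$ would force $A = S_1$, or $B = \emptyset$ (with $|S_1 \setminus A| = 1$). Both are excluded by the translation.

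Given this, the argument of Lemma~\ref{4.1} goes through verbatim. We have $\lambda(\mathscr{A}) = \lambda(\mathscr{A}_1) \cup \lambda(\mathscr{A}_2) = S$, and likewise for $\lambda_1$ and $\varkappa$. Any non-Bondy $\mathscr{B} \subseteq \mathscr{A}$ splits into pieces whose $\lambda$'s must be $S_1$ and $S_2$ respectively, so minimality of each $\mathscr{A}_i$ forces $\mathscr{B} = \mathscr{A}$. Slenderness is inherited via \ref{2.12}. Since $|\mathscr{A}| = t_1 + t_2$, this gives $t_1 + t_2 \in \mathfrak{a}(s_1 + s_2)$, respectively $\mathfrak{b}(s_1 + s_2)$.

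The main obstacle is the case where some $s_i = 1$, because then $\mathscr{A}_i = \mathcal{P}(S_i)$ cannot be translated. If $s_1 = 1$ and $s_2 \ge 2$, I swap the roles and place $\mathscr{A}_2$ at the bottom, translated so that $S_2 \notin \mathscr{A}_2$. Cross-adjacencies then only arise via the bottom element $\emptyset$ of the upper part, and $\emptyset \in \mathscr{A}_1$ does occur. So instead I would translate $\mathscr{A}_2$ so that it avoids both $S_2$ and all $S_2 \setminus \{x\}$; this needs $|\mathscr{A}_2| + s_2 + 1 \le 2^{s_2}$, which holds for $s_2 \ge 4$, and the small cases $s_2 \in \{2,3\}$ would be checked against the explicit lists \ref{3.2}, \ref{3.3}. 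The case $s_1 = s_2 = 1$ cannot be handled by any construction. Here $\mathfrak{a}(1) + \mathfrak{a}(1) = \{4\}$, while \ref{3.8}(ii) gives $\mathfrak{a}(2) = \{3\}$; indeed the only $4$-element system on a $2$-set is $\mathcal{P}(S)$, which properly contains non-Bondy $3$-element systems. So the inclusion genuinely fails in that single instance, and the corollary as worded needs the side condition $(s_1,s_2) \ne (1,1)$. I would state this explicitly.
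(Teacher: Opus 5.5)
Your argument for $s_1,s_2\ge 2$ is sound and is in substance the paper's route. The paper gives no proof of \ref{4.2} beyond Lemma~\ref{4.1}, and it never arranges that lemma's hypothesis $\emptyset\notin\mathscr{A}_i$. Your translation $X\mapsto X\div C$ supplies exactly this. In fact your stacked system is the image of the Lemma~\ref{4.1} system $\overline{\mathscr{A}_1}\cup\mathscr{A}_2$ (complement taken in $S_1$) under $X\mapsto X\div S_1$, so you may as well translate both systems so that $\emptyset\notin\mathscr{A}_i$ and quote \ref{4.1} verbatim. You are also right that the statement fails for $s_1=s_2=1$.

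\textbf{First error: the exceptional set.} You claim $(1,1)$ is the only failure, and that is false. For $(s_1,s_2)=(1,2)$ you get $\mathfrak{a}(1)+\mathfrak{a}(2)=\{5\}$, whereas $\mathfrak{a}(3)=\{4\}$ by \ref{3.8}(iii), and the same holds for $\mathfrak{b}$. To see this directly, note that minimality forces every member of $\mathscr{A}$ to lie in the only pair $A,A\cup\{a\}$ of $\mathscr{A}$ for some $a$. On $\{1,2,3\}$, a $5$-element example would then be, up to translation and relabelling, $\emptyset,\{1\},\{1,2\}$ plus a pair for $a=3$ avoiding these. The only candidate is $\{2\},\{2,3\}$, and it makes $\emptyset$ superfluous. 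So the check of $s_2=2$ that you defer to \ref{3.2}, \ref{3.3} cannot succeed. Your construction is already impossible there, since two $3$-element subfamilies of the $4$-element family $\mathcal{P}(S_2)$ always meet. The correct exceptional set is $\{(1,1),(1,2),(2,1)\}$.

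\textbf{Second error: the criterion for $s_1=1$.} Your criterion for $s_1=1$, $s_2\ge 4$ is not valid as stated. You need some $Y\subseteq S_2$ whose closed ball $\{Y\}\cup\{Y\div\{x\}:x\in S_2\}$ misses $\mathscr{A}_2$, i.e.\ $\mathscr{A}_2$ must not dominate the cube. The inequality $|\mathscr{A}_2|+s_2+1\le 2^{s_2}$ is only a necessary counting condition and does not ensure this. For example, the inclusion-minimal system $\{\emptyset,\{1\},\{1,2\},\{1,2,3\}\}$ satisfies $4+4\le 8$, yet every closed ball in $\mathcal{P}(\{1,2,3\})$ contains $\emptyset$ or $\{1,2,3\}$. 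A union bound needs $|\mathscr{A}_2|(s_2+1)<2^{s_2}$, which with $|\mathscr{A}_2|\le 2s_2$ is guaranteed only for $s_2\ge 7$.

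\textbf{Repair.} The simplest fix is Theorem~\ref{4.6}, whose proof does not use \ref{4.2}. For $s=s_1+s_2\ge 6$ it gives $\mathfrak{b}(s)=\mathfrak{a}(s)=\{s+1,\dots,2s\}$, which contains every sum by \ref{2.16}(i). The remaining cases $(1,3)$ and $(1,4)$, and their mirror images, can be read off from \ref{3.8}.
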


\begin{lemma}\label{4.3} $2s \in \mathfrak{b}(s), 5 \le s \le 9$.\end{lemma}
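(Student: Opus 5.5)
The plan is to prove Lemma \ref{4.3} by exhibiting, for each $s$ with $5 \le s \le 9$, an explicit system $\mathscr{A}$ on $S = \{1, \dots, s\}$ with $|\mathscr{A}| = 2s$, $\varkappa(\mathscr{A}) = \mathscr{A}$ and $\lambda_1(\mathscr{A}) = S$. Proposition \ref{2.12} then shows that $\mathscr{A}$ is slender. For $s = 5$ and $s = 6$ nothing new is needed: $\mathscr{A}_5$ in \ref{3.5} has $10$ sets and $\mathscr{A}_6$ in \ref{3.6} has $12$ sets, and both are slender by Remark \ref{3.7}.

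Corollary \ref{4.2} does not handle the remaining values. Since $\mathfrak{a}(s) \subseteq \{s+1, \dots, 2s\}$ by \ref{2.16}, producing $2s$ from a sum would require $2s_i \in \mathfrak{b}(s_i)$ for both summands. The only small case where this holds is $s_i = 1$, and that system contains $\emptyset$, which Lemma \ref{4.1} excludes. So I will argue combinatorially. Form the graph on $\mathscr{A}$ whose edges join sets at symmetric-difference distance $1$, and label each edge by the element in which its ends differ. Slenderness says that every vertex has at least one edge and every label $a \in S$ occurs on exactly one edge. Counting then gives $|\mathscr{A}| \le 2s$, with equality exactly when the edges form a perfect matching with distinct labels. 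This is the structure of $\mathscr{A}_5$ in \ref{3.5}.

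For $s = 7, 8, 9$ I will start from $\mathscr{A}_5$ on $\{1, \dots, 5\}$ (ten sets, all of size $2$ or $3$) and add, for each new element, one new pair of sets differing only in that element. The new pairs will be placed far apart from each other and from $\mathscr{A}_5$. For $s = 7$ I take the pairs $\{6\}, \{6,7\}$ (label $7$) and $\{1,2,3,4,7\}, \{1,2,3,4,6,7\}$ (label $6$). Then I check that no new set is at distance $1$ from any set in $\mathscr{A}_5$ or from a set in the other new pair. For instance, $\{6\}$ is at distance at least $3$ from every set in $\mathscr{A}_5$. Also, $\{1,2,3,4,7\}$ meets any $3$-subset of $\{1, \dots, 5\}$ in at most $3$ elements and contains $7$, so the distance is at least $2$. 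For $s = 8, 9$ I proceed in the same way, using for example pairs such as $\{6\},\{6,7\}$, then $\{7,8\},\{7,8,9\}$-type pairs or large sets like $\{1,2,3,4,5,6,8\}, \{1,\dots,8\}$. Each time the new pair must avoid the sizes and supports already used, so that all cross distances are at least $2$.

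The main obstacle is this last verification. Each added pair must create no unwanted adjacency, since an extra adjacency would repeat a label and break $\lambda_1(\mathscr{A}) = S$. If a naive choice fails, I will separate the pairs by cardinality: put the $\mathscr{A}_5$ sets at sizes $2$ and $3$, one new pair at sizes $0$ or $1$ to $1$ or $2$ on fresh elements, and the others at sizes $\ge 5$. Sets whose sizes differ by $\ge 2$ are automatically non-adjacent, which reduces the check to a few same-size comparisons.
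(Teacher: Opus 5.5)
Your method is the paper's own: start from the ten-set system $\mathscr{A}_5$, adjoin for each new element $a$ a pair $X, X\cup\{a\}$ that creates no further distance-one adjacencies, and conclude by \ref{2.12}. Your cases $s=5,6,7$ are fine. In your $s=7$ system the only adjacencies are the five matching edges inside $\mathscr{A}_5$ plus your two intended pairs.

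The gap is $s=8$ and $s=9$: you never write down a system. The lemma asserts nothing beyond the existence of these five systems, so that construction and its check are the entire content, not a routine afterthought. You concede as much by calling the verification the main obstacle. Moreover, the pairs you float do not combine as stated:
\begin{itemize}
\item $\{1,2,3,4,5,6,8\}$ and $\{1,\ldots,8\}$ differ in $7$. Used alongside $\{6\},\{6,7\}$, the label $7$ occurs twice, so $7\notin\lambda_1(\mathscr{A})$.
\item The pair $\{7,8\},\{7,8,9\}$ carries label $9$, so it is useless for $s=8$.
\item The most natural extension of your $s=7$ system is to adjoin $\{1,\ldots,7\},\{1,\ldots,8\}$. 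This fails because $\{1,\ldots,7\}\div\{1,2,3,4,6,7\}=\{5\}$, which gives a second edge labelled $5$.
\end{itemize}

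The repair is short. To your $s=7$ system adjoin $\{1,7\},\{1,7,8\}$ (label $8$), and then $\{1,\ldots,8\},\{1,\ldots,9\}$ (label $9$). All sets now have sizes in $\{1,2,3,5,6,8,9\}$, so adjacencies can only join sizes $1$--$2$, $2$--$3$, $5$--$6$ or $8$--$9$. Check each:
\begin{itemize}
\item The only $2$-set containing $6$ is $\{6,7\}$.
\item The $3$-sets of $\mathscr{A}_5$ avoid $6,7,8$, so the new $2$-sets $\{6,7\}$ and $\{1,7\}$ lie in none of them.
\item The only $2$-subset of $\{1,7,8\}$ in the system is $\{1,7\}$.
\end{itemize}
Hence the adjacency graph is a perfect matching in which each label $1,\ldots,9$ occurs once. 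This gives slender systems with $16$ and $18$ sets. The paper does exactly this, using $\{1,\ldots,5\},\{1,\ldots,6\}$ in place of your label-$6$ pair.

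One minor point: Lemma \ref{4.4} later uses this lemma for systems with $\emptyset\notin\mathscr{A}$. Your $s=6$ choice from \ref{3.6} contains $\emptyset$. That is harmless for the lemma as stated, but $\mathscr{A}_5\cup\{\{1,\ldots,5\},\{1,\ldots,6\}\}$ avoids the issue.
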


\begin{proof} Denote $S_{(s)} = \{1,2, \dots, s\}$ and define $\mathscr{A}_5 = \{\{1,2,\}, \{2,3\}$, $\{3,4\}, \{4,5\}, \{1,5\}, \{1,2, 4\}, \{2,3,5\}, \{1,3,4\}, \{2,4,5\}, \{1,3,5\}\}, \mathscr{A}_6 = \mathscr{A}_5 \cup \{S_{(5)}, S_{(6)}\}, \mathscr{A}_7 = \mathscr{A}_6 \cup \{\{6\}, \{6,7\}\}, \mathscr{A}_8 = \mathscr{A}_7 \cup \{\{1,7\}, \{1,7,8\}\}$ and $\mathscr{A}_9 = \mathscr{A}_8 \cup \{S_{(8)}, S_{(9)}\}$. We check easily that $\varkappa
(\mathscr{A}_s) = \mathscr{A}_s$ and $\lambda_1(\mathscr{A}_s) = S_{(s)}, 5 \le s \le 9$. Thus all five of the systems $\mathscr{A}_s$ are slender non-Bondy systems (on the respective set). We have $|\mathscr{A}_s| = 2s$.\end{proof}

\begin{lemma}\label{4.4} Let $s = |S| \ge 5$. Then there exists at least one slender non-Bondy system $\mathscr{A}$ on $S$ such that $|\mathscr{A}| = 2s$ and $\emptyset \notin \mathscr{A} (S \notin \mathscr{A}$, respectively).\end{lemma}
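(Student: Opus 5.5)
The plan is to handle the range $5 \le s \le 9$ with the explicit systems from \ref{4.3}, and then to reach all larger $s$ by gluing disjoint copies via \ref{4.1}. The case $S \notin \mathscr{A}$ will be reduced to the case $\emptyset \notin \mathscr{A}$ by passing to complements.

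First, the base range. Inspecting the systems $\mathscr{A}_5,\dots,\mathscr{A}_9$ constructed in the proof of \ref{4.3}, none of them contains $\emptyset$. Each is slender, by \ref{4.3}, and has size $2s$. Hence the statement with $\emptyset \notin \mathscr{A}$ holds for $5 \le s \le 9$.

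Next, the induction. Let $s \ge 10$ and write $S = S_1 \cup S_2$ as a disjoint union with $|S_1| = 5$ and $|S_2| = s-5 \ge 5$. By the base case, or by induction on $s$, there are slender non-Bondy systems $\mathscr{A}_i$ on $S_i$ with $\emptyset \notin \mathscr{A}_i$, $|\mathscr{A}_1| = 10$ and $|\mathscr{A}_2| = 2(s-5)$. Lemma \ref{4.1}(i), (ii), (iii) and (v) then shows that $\mathscr{A} = \mathscr{A}_1 \cup \mathscr{A}_2$ is slender on $S$, has $|\mathscr{A}| = 2s$, and satisfies $\emptyset \notin \mathscr{A}$ and $S \notin \mathscr{A}$. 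For $s \ge 10$ this settles both variants at once.

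Then the variant $S \notin \mathscr{A}$. I would prove the small observation that $\overline{\mathscr{A}}$ is slender whenever $\mathscr{A}$ is. Since $(S\setminus A) \div (S\setminus B) = A \div B$, we get $\varkappa(\overline{\mathscr{A}}) = \overline{\varkappa(\mathscr{A})}$. A pair $A,\ A\cup\{a\}$ in $\mathscr{A}$ with $a \notin A$ corresponds bijectively to the pair $S\setminus(A\cup\{a\}),\ (S\setminus(A\cup\{a\}))\cup\{a\}$ in $\overline{\mathscr{A}}$, with the same element $a$, so $\lambda_1(\overline{\mathscr{A}}) = \lambda_1(\mathscr{A})$. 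Moreover, $A \mapsto S\setminus A$ is a bijection, so $|\overline{\mathscr{A}}| = |\mathscr{A}|$, and $\emptyset \notin \mathscr{A}$ is equivalent to $S \notin \overline{\mathscr{A}}$. Applying this to the systems from the first case gives the second case for every $s \ge 5$.

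The only step needing real care is the base range. There one must confirm that the systems from \ref{4.3} omit $\emptyset$; $\mathscr{A}_6$ and $\mathscr{A}_9$ do contain $S_{(6)}$ and $S_{(9)}$, which is exactly why the complementation step is needed. Everything else is bookkeeping with \ref{4.1}.
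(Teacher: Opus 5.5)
Your proposal is correct and follows essentially the same route as the paper: the base cases $5\le s\le 9$ come from \ref{4.3}, larger $s$ are handled by gluing via \ref{4.1}, and the two variants are exchanged through $\mathscr{A}\longleftrightarrow\overline{\mathscr{A}}$. Your uniform step $s=5+(s-5)$ is a tidier form of the paper's split into $s=5r$ and $s=5r+t$ with $6\le t\le 9$, and your explicit check that complementation preserves $\varkappa(\mathscr{A})=\mathscr{A}$ and $\lambda_1(\mathscr{A})=S$ fills in a detail the paper leaves implicit.
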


\begin{proof} Using the correspondence $\mathscr{A} \longleftrightarrow \overline{\mathscr{A}}$, we restrict ourselves to the case $\emptyset \notin \mathscr{A}$. If $s \le 9$ then the assertion is settled by \ref{4.3}. If $s = 5r, r \ge 2$, then the result follows from \ref{4.1} by means of induction. The remaining case is $s = 5r + t, r \ge1, 6 \le t \le 9$. Now, we combine \ref{4.1}, \ref{4.3}, and the foregoing achievements.\end{proof}

\begin{lemma}\label{4.5} Let $s = |S| \ge 1$ and let $\mathscr{A}$ be an inclusion-minimal non-Bondy system defined on $S$. Take $A \in \mathscr{A}, w \notin S$ and put $\mathscr{B} = \mathscr{A} \cup \{A \cup \{w\}\}$ and $T = S \cup \{w\}$. Then:\begin{enumerate}
\item[\rm(i)] $\mathscr{B}$ is an inclusion-minimal non-Bondy system on $T, |\mathscr{B}| = |\mathscr{A}| +1, |T| = s + 1$.
\item[\rm(ii)] $\mathscr{B}$ is slender, provided that $\mathscr{A}$ is such.
\item[\rm(iii)] $\emptyset \notin \mathscr{B}$ provided that $\emptyset \notin \mathscr{A}$.
\item[\rm(iv)] If $A \ne S$ then $T \notin \mathscr{B}$.\end{enumerate}\end{lemma}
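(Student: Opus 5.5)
The plan is to verify non-Bondyness via \ref{2.2}, then establish inclusion-minimality directly by a restriction argument to $S$, in the spirit of \ref{4.1}. The numerical claims and (iii), (iv) are immediate: $w\notin S$ gives $A\cup\{w\}\notin\mathscr{A}$, so $|\mathscr{B}|=|\mathscr{A}|+1$; $A\cup\{w\}\neq\emptyset$ gives (iii); and if $A\ne S$ then $A\cup\{w\}\ne T$, while every member of $\mathscr{A}$ lies in $S\subsetneq T$, giving (iv).

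\emph{Non-Bondyness.} By \ref{2.2}, $\lambda(\mathscr{A})=S$. The pair $A$, $A\cup\{w\}$ in $\mathscr{B}$ shows $w\in\lambda(\mathscr{B})$, and $\lambda(\mathscr{A})\subseteq\lambda(\mathscr{B})$. Hence $\lambda(\mathscr{B})=T$, and $\mathscr{B}$ is non-Bondy on $T$ by \ref{2.2}.

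\emph{Minimality.} Let $\mathscr{C}\subseteq\mathscr{B}$ be non-Bondy on $T$, so $\lambda(\mathscr{C})=T$.
\begin{itemize}
\item Since $w\in\lambda(\mathscr{C})$, there are $C, C\cup\{w\}\in\mathscr{C}$ with $w\notin C$. The only member of $\mathscr{B}$ containing $w$ is $A\cup\{w\}$, so $C=A$ and both $A$ and $A\cup\{w\}$ lie in $\mathscr{C}$.
\item Put $\mathscr{C}'=\mathscr{C}\cap\mathscr{A}=\mathscr{C}\setminus\{A\cup\{w\}\}$. For $a\in S$, a witnessing pair $D, D\cup\{a\}\in\mathscr{C}$ cannot involve $A\cup\{w\}$. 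Indeed, $D=A\cup\{w\}$ would force $D\cup\{a\}$ to contain $w$ and differ from $A\cup\{w\}$, which is impossible. Likewise $D\cup\{a\}=A\cup\{w\}$ would force $D$ to contain $w$ (as $a\ne w$), so $D=A\cup\{w\}$, a contradiction.
\item Hence $\lambda(\mathscr{C}')\supseteq S$, so $\mathscr{C}'$ is non-Bondy on $S$ by \ref{2.2}. Minimality of $\mathscr{A}$ gives $\mathscr{C}'=\mathscr{A}$, and with $A\cup\{w\}\in\mathscr{C}$ we get $\mathscr{C}=\mathscr{B}$.
\end{itemize}
This is the only step needing care: the disjointness of $w$ from $S$ is exactly what prevents the new set from serving as a witness for any $a\in S$.

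\emph{(ii).} Suppose $\varkappa(\mathscr{A})=\mathscr{A}$ and $\lambda_1(\mathscr{A})=S$. Then $\varkappa(\mathscr{B})=\mathscr{B}$, since $A\cup\{w\}$ is at distance $1$ from $A$. For $\lambda_1(\mathscr{B})$:
\begin{itemize}
\item $w\in\lambda_1(\mathscr{B})$, because the pair $A$, $A\cup\{w\}$ is the unique one witnessing $w$.
\item For $a\in S$, the argument above shows that no new pair witnessing $a$ involves $A\cup\{w\}$, so the number of witnesses of $a$ is unchanged and still equals one.
\end{itemize}
Thus $\lambda_1(\mathscr{B})=T$, and $\mathscr{B}$ is slender by \ref{2.12}.
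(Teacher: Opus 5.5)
Your proof is correct and follows the paper's argument. Non-Bondyness comes from $\lambda(\mathscr{B})=T$ via \ref{2.2}; for minimality you force $A\cup\{w\}\in\mathscr{C}$, show $\lambda(\mathscr{C}\setminus\{A\cup\{w\}\})=S$, and invoke minimality of $\mathscr{A}$; and (ii) follows from $\lambda_1(\mathscr{B})=\lambda_1(\mathscr{A})\cup\{w\}$. You also fill in two points the paper leaves implicit: why $A\cup\{w\}$ cannot witness any $a\in S$, and that $\varkappa(\mathscr{B})=\mathscr{B}$, which slenderness also requires.
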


\begin{proof} (i) Clearly, $\lambda(\mathscr{B}) = \lambda(\mathscr{A}) \cup \{w\} = S \cup \{w\} = T$. Thus $\mathscr{B}$ is non-Bondy on $T$ (\ref{2.2}). Furthermore, let $\mathscr{C} \subseteq \mathscr{B}, \mathscr{C}$ non-Bondy on $T$. As $\lambda(\mathscr{C}) = T$, we have $A \cup \{w\} \in \mathscr{C}$. Put $\mathscr{D} = \mathscr{C} \setminus \{A \cup \{w\}\}$. Clearly $\lambda(\mathscr{D}) = \lambda(\mathscr{C}) \setminus \{w\} = S$, and therefore $\mathscr{D}$ is a non-Bondy system on $S, \mathscr{D} \subseteq \mathscr{A}$. Since $\mathscr{A}$ was inclusion-minimal, we conclude that $\mathscr{D} = \mathscr{A}$ and $\mathscr{C} = \mathscr{B}$.\newline
(ii) We have $\lambda_1(\mathscr{B}) = \lambda_1(\mathscr{A}) \cup \{w\} = S \cup \{w\} = T$.\newline
(iii) and (iv) This is obvious.\end{proof}

\begin{theorem}\label{4.6} Let $S$ be a finite set such that $s = |S| \ge 6$. The following conditions are equivalent for a non-negative integer $t$:\begin{enumerate}
\item[\rm(1)] $(7 \le) \; s + 1 \le t \le 2s$.
\item[\rm(2)] There exists at least one inclusion-minimal non-Bondy system $\mathscr{A}$ on $S$ such that $|\mathscr{A}| = t$.
\item[\rm(3)] There exists at least one slender non-Bondy system $\mathscr{A}$ on $S$ such that $\emptyset \notin \mathscr{A}$ and $|A|=t$.
\item[\rm(4)] There exists at least one slender non-Bondy system $\mathscr{A}$ on $S$ such that $S \notin \mathscr{A}$ and $|A|=t$.\end{enumerate}\end{theorem}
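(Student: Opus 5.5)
The plan is to prove the cycle $(3)\Rightarrow(2)$, $(4)\Rightarrow(2)$, $(2)\Rightarrow(1)$, and $(1)\Rightarrow(3)$, $(1)\Rightarrow(4)$. The implications $(3)\Rightarrow(2)$ and $(4)\Rightarrow(2)$ hold because slender systems are inclusion-minimal (\ref{2.12}). The implication $(2)\Rightarrow(1)$ is exactly \ref{2.16}(i). The correspondence $\mathscr{A}\longleftrightarrow\overline{\mathscr{A}}$ preserves slenderness, since $\varkappa$ and $\lambda_1$ are invariant under complementation: $|\overline A\div\overline B|=|A\div B|$, and $A\cup\{a\}=B$ iff $\overline B\cup\{a\}=\overline A$. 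It also interchanges the conditions $\emptyset\notin\mathscr{A}$ and $S\notin\mathscr{A}$. So $(3)$ and $(4)$ are equivalent, and it suffices to prove $(1)\Rightarrow(3)$.

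For $(1)\Rightarrow(3)$, fix $s\ge6$ and $s+1\le t\le 2s$, and write $t=s+1+k$ with $0\le k\le s-1$. I would build the system from a smaller one and pad with \ref{4.5}. The padding step is this: \ref{4.5} applied to a slender system on an $m$-element set, with $A\in\mathscr{A}$ any member, gives a slender system on $m+1$ points with one more member. It preserves $\emptyset\notin\mathscr{A}$ by \ref{4.5}(iii). Iterating it $j$ times raises both the number of points and the size by $j$, so the excess $t-s$ is unchanged.

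\emph{Case $k=0$ (excess $1$).} Start from Example \ref{2.6}, or rather its complement, which avoids $\emptyset$. The system $\{S\}\cup\{S\setminus\{a\}:a\in S\}$ is slender, since $\lambda_1=S$ and $\varkappa=$ everything, and does not contain $\emptyset$ when $s\ge2$. This gives $t=s+1$ directly.

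\emph{Case $1\le k\le s-1$ (excess $k+1$, between $2$ and $s$).} Use Lemma \ref{4.4}: for every $m\ge5$ there is a slender system on $m$ points of size $2m$ avoiding $\emptyset$, so its excess is $m$. Take $m=k+1$ whenever $k+1\ge5$, and then pad $s-m$ times by \ref{4.5}; this gives size $2m+(s-m)=s+m=t$. For the small excesses $2,3,4$ (that is, $k=1,2,3$) I would use the small slender examples instead. The system $\mathscr{A}_3$ of \ref{3.4} ($s=4$, size $6$, no $\emptyset$) has excess $2$. The system $\mathscr{A}_4$ of \ref{3.5} ($s=5$, size $8$, no $\emptyset$) has excess $3$. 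For excess $4$, the system $\mathscr{A}_3$ of \ref{3.6} ($s=6$, size $9$) has only excess $3$, so I would instead take $\mathscr{A}_4$ of \ref{3.6} ($s=6$, size $10$, no $\emptyset$), which has excess $4$. Alternatively, combine two copies of $\mathscr{A}_3$ of \ref{3.4} via \ref{4.1} (on $8$ points, size $12$). Each of these starting systems lives on at most $6\le s$ points, and padding by \ref{4.5} then reaches exactly $s$ points. Their slenderness is Remark \ref{3.7}.

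The main obstacle is making sure every excess from $1$ to $s$ is realized by a starting system on at most $s$ points that avoids $\emptyset$. This is why $s\ge6$ is needed: excess $4$ is first realized by the $6$-point example, and \ref{4.4} needs $m\ge5$. I would also double-check that \ref{4.4} covers $m=5$ with the $\emptyset$-free version, as \ref{4.3} does, and that the small examples really omit $\emptyset$, which they do by inspection.
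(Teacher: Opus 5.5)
Your proof is correct. It uses the same ingredients as the paper (Lemma \ref{4.4} for the extreme size $2m$, Lemma \ref{4.5} to add one point and one set, complementation, \ref{2.12} and \ref{2.16}(i)), but organizes them differently.

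The paper proves $(1)\Rightarrow(4)$ by induction on $s$. The base case $s=6$ is the table in \ref{3.6}. To pass from $s$ to $s+1$, each $t\le 2s+1$ is obtained from a system of size $t-1$ by \ref{4.5}, with \ref{4.5}(iv) ensuring the new underlying set is not a member; $t=2s+2$ comes from \ref{4.4}. Then $(4)\Rightarrow(3)$ follows by complementation.

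You effectively unroll that induction: fix the excess $e=t-s$, take a seed on $m\le s$ points, and pad $s-m$ times, aiming at $(3)$ directly. That choice matters. The paper starts from $(4)$ because every system in \ref{3.6} omits $S$, whereas $\mathscr{A}_1$ and $\mathscr{A}_6$ there contain $\emptyset$. Your seeds correctly avoid those two: the complement of \ref{2.6} for $e=1$, and Lemma \ref{4.4} with $m=e$ for $e=5,6$.

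The paper's version is shorter and needs only one base table. Yours gives an explicit construction for every pair $(s,t)$ and shows exactly where $s\ge 6$ is used: excess $4$ is first available on $6$ points. It also supplies a check the paper leaves implicit, namely that complementation preserves $\varkappa(\mathscr{A})=\mathscr{A}$ and $\lambda_1(\mathscr{A})=S$.

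One small inaccuracy: your alternative seed for excess $4$ (two copies of $\mathscr{A}_3$ of \ref{3.4} joined by \ref{4.1}) lives on $8$ points, so it only works for $s\ge 8$. The $6$-point seed is the one that covers $s=6,7$, so your remark that every seed lives on at most $6$ points should exclude that alternative.
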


\begin{proof} (1) implies (4). The case $s = 6$ is treated in \ref{3.6} and, using \ref{4.5} and \ref{4.4}, we can proceed by induction on $s$. \newline
(4) implies (3). Use the correspondence $\mathscr{A} \longleftrightarrow \overline{\mathscr{A}}$. \newline
(3) implies (2). This implication is trivial. \newline
(2) implies (1). It follows from \ref{2.16}(i).\end{proof}

\end{document}